\theoremstyle{plain}
\newtheorem{thm}{Theorem}
\newtheorem{prop}[thm]{Proposition}
\newtheorem*{thm*}{Theorem}
\newtheorem{lem}[thm]{Lemma}
\newcommand{\negative}[1]{#1'}
\newcommand{\constr}[1]{\hat{E}_1(#1)}
\newcommand{\gfextn}[0]{pdf}
\newcommand{\rheinwidth}[0]{1.11in}
\numberwithin{thm}{section}
\numberwithin{equation}{section}
\subjclass[2000]{Primary: 52B60; Secondary: 11R09, 52A10, 52B05}
\keywords{Reinhardt polygon, Reinhardt polynomial, isodiametric problem, isoperimetric problem, diameter, perimeter, width.}
\begin{document}

\title{Sporadic Reinhardt polygons}
\date{\today}

\author{Kevin G. Hare}
\address{Department of Pure Mathematics, University of Waterloo, Waterloo, Ontario, Canada N2L 3G1.}
\email{kghare@uwaterloo.ca}
\thanks{Research of K.G. Hare was partially supported by NSERC}

\author{Michael J. Mossinghoff}
\address{Department of Mathematics, Davidson College, Davidson, NC 28036 USA.}
\email{mimossinghoff@davidson.edu}
\thanks{Research of M.~J. Mossinghoff was partially supported by a grant from the Simons Foundation (\#210069).}

\begin{abstract}
Let $n$ be a positive integer, not a power of two.
A \textit{Reinhardt polygon} is a convex $n$-gon that is optimal in three different geometric optimization problems: it has maximal perimeter relative to its diameter, maximal width relative to its diameter, and maximal width relative to its perimeter.
For almost all $n$, there are many Reinhardt polygons with $n$ sides, and many of them exhibit a particular periodic structure.
While these periodic polygons are well understood, for certain values of $n$, additional Reinhardt polygons exist that do not possess this structured form.
We call these polygons \textit{sporadic}.
We completely characterize the integers $n$ for which sporadic Reinhardt polygons exist, showing that these polygons occur precisely when $n=pqr$ with $p$ and $q$ distinct odd primes and $r\geq2$.
We also prove that a positive proportion of the Reinhardt polygons with $n$ sides are sporadic for almost all integers $n$, and we investigate the precise number of sporadic Reinhardt polygons that are produced  for several values of $n$ by a construction that we introduce.
\end{abstract}

\maketitle

\section{Introduction}\label{secIntroduction}

For a convex polygon in the plane, its \textit{diameter} is the maximum distance between two of its vertices; its \textit{width} is the minimal distance between a pair of parallel lines that enclose it.
A number of natural problems for polygons arise by fixing the number of sides $n$, and fixing one of the four quantities diameter, width, perimeter, or area, and then maximizing or minimizing another one of these attributes.
Six different nontrivial optimization problems for polygons arise in this way, including for example the well-known isoperimetric problem, where the perimeter of a convex $n$-gon is fixed, and one wishes to maximize the area.
In that case, the regular $n$-gon is the unique optimal solution for all $n$, but this is not the case in the other five nontrivial extremal problems in this family.

Prior research has shown that a particular family of polygons is optimal in three of these geometric optimization problems, provided that $n$ is not a power of $2$:
\begin{enumerate}[\quad1.]
\item\label{probA} The isodiametric problem for the perimeter (maximize the perimeter, for a fixed diameter).
\item\label{probB} The isodiametric problem for the width (maximize the width, for a fixed diameter).
\item\label{probC} The isoperimetric problem for the width (maximize the width, for a fixed perimeter).
\end{enumerate}
Problem~\ref{probA} was first studied by Reinhardt in 1922 \cite{Reinhardt22}, and later by others \cite{Datta97,LarmanTamvakis84,Mossinghoff06AMM,Mossinghoff06DCG,Vincze50}.
Problem~\ref{probB} was investigated by Bezdek and Fodor \cite{BezdekFodor00} in 2000, and problem~\ref{probC} was considered by Audet, Hansen, and Messine \cite{AudetHansonMessine09} in 2009.

Before describing the family of polygons that is optimal in these three problems when $n\neq2^m$, we recall that a \textit{Reuleaux polygon} is a closed, convex region in the plane whose boundary consists of a finite number of circular arcs, each with the same curvature, with the property that every pair of parallel lines that sandwiches the region is the same distance apart (that is, Reuleaux polygons have \textit{constant width}).
We say a point on the boundary of a Reuleaux polygon $R$ is a \textit{vertex} of $R$ if it lies at the intersection of two adjacent circular arcs on its boundary.
We briefly recall a number of facts concerning Reuleaux polygons (see \cite{Mossinghoff06AMM} for more details).
First, every Reuleaux polygon $R$ has an odd number of vertices, and each vertex of $R$ is equidistant from all of the points on one of the circular arcs that form the boundary of $R$.
Second, connecting all pairs of vertices at maximal distance from one another in a Reuleaux polygon forms a \textit{star polygon}---a closed path in the plane consisting of an odd number of line segments, each of which intersects all of the others.
The sum of the measures of the angles at the vertices of a star polygon is $\pi$, and each line segment comprising the star polygon associated with a Reuleaux polygon has the same length, equal to the diameter of the Reuleaux polygon.
Third, we can recover the Reuleaux polygon $R$ from its associated star polygon $S$ by visiting each vertex $v$ of $S$ and drawing a circular arc between the two vertices adjacent to $v$ in $S$, with radius equal to the length of each line segment in $S$.
Last, an ordinary polygon can always be inscribed in a Reuleaux polygon with the same diameter.

A polygon with $n\neq2^m$ sides that is optimal in the three problems described above is called a \textit{Reinhardt polygon}, which we define as as an equilateral convex polygon $P$ that can be inscribed in a Reuleaux polygon $R$ having the same diameter as $P$ in such a way that every vertex of $R$ is a vertex of $P$ \cite{AudetHansonMessine09,BezdekFodor00,Reinhardt22}.
If $n$ is odd, then the regular $n$-gon is a Reinhardt polygon, but this is not the case when $n$ is even.
In addition, for almost all $n\geq3$ there is more than one Reinhardt polygon with $n$ sides---this is the case for all $n$ except those of the form $p$, $2p$, or $2^m$, where $p$ is prime \cite{Mossinghoff11}.
For example, Figure~\ref{figAll21} exhibits the ten different Reinhardt polygons having $n=21$ sides.
These polygons are all distinct, in that one cannot be obtained from another by some combination of rotations and flips.

\begin{figure}[t]
\caption{Reinhardt polygons with $n=21$ sides.}\label{figAll21}
\begin{tabular}{c@{\quad}c@{\quad}c@{\quad}c}\\[0pt]
\includegraphics[width=\rheinwidth]{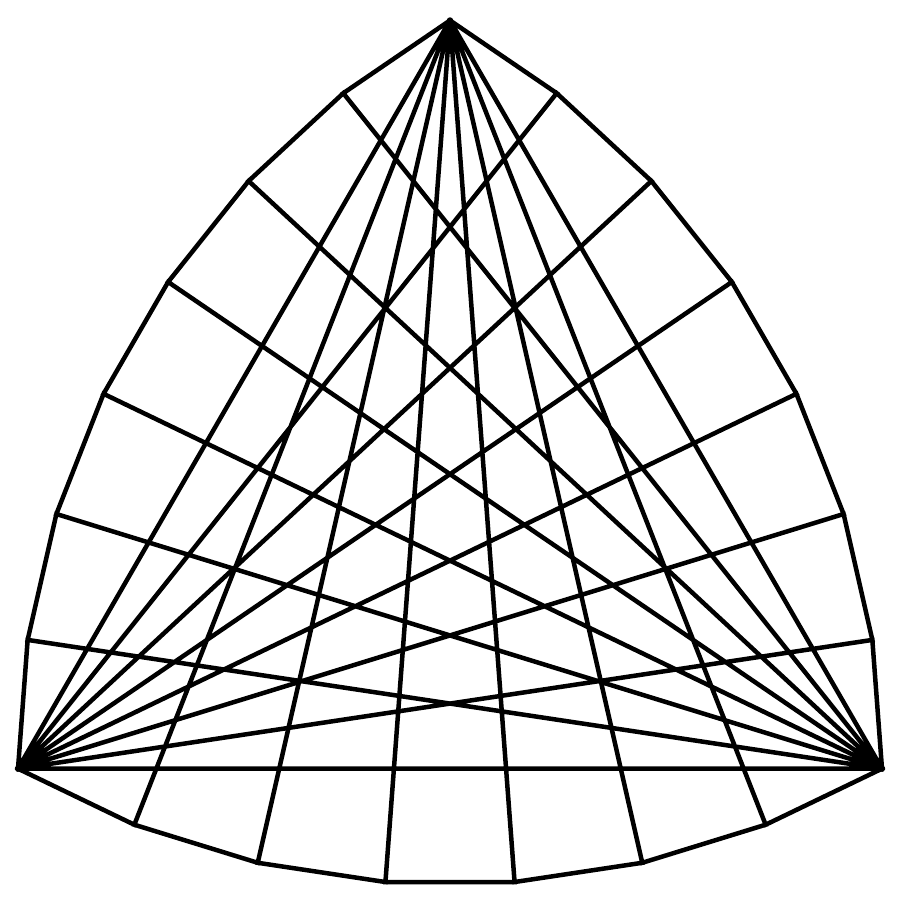} &
\includegraphics[width=\rheinwidth]{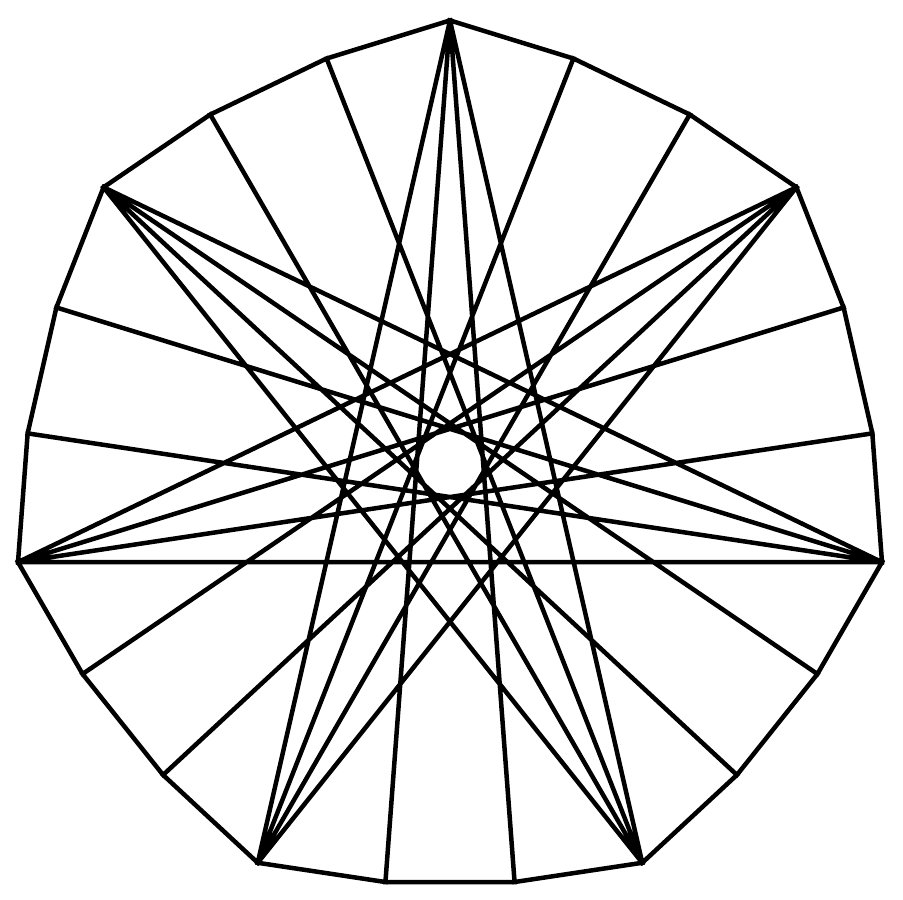} &
\includegraphics[width=\rheinwidth]{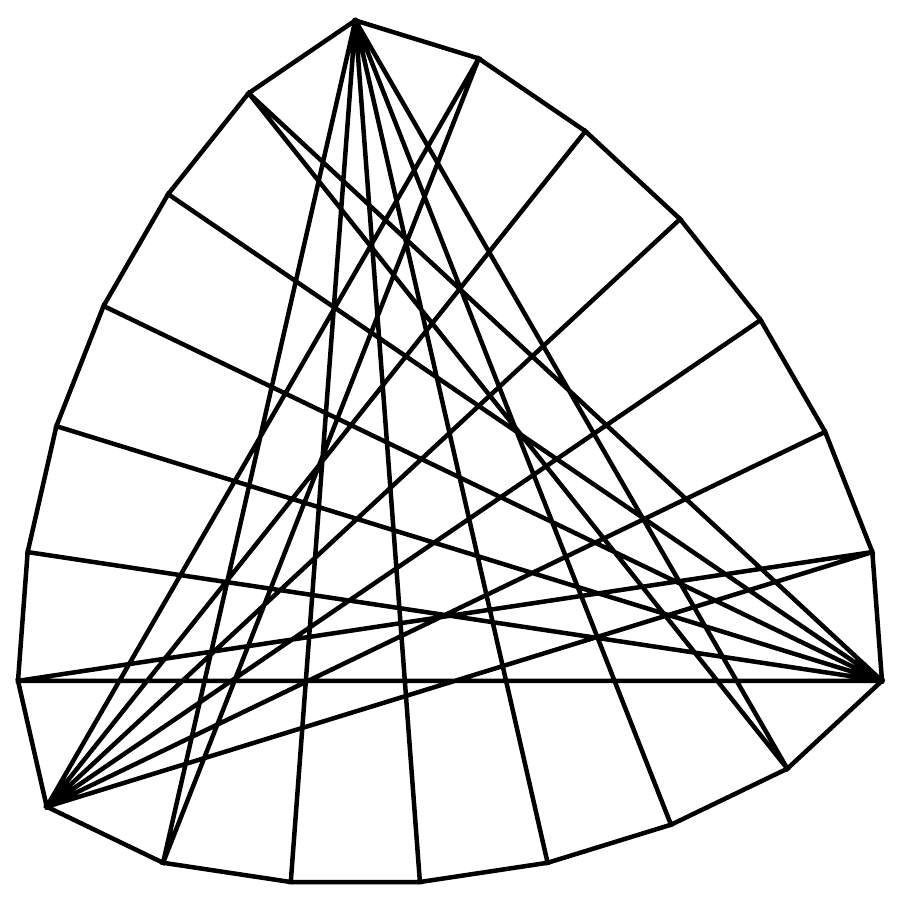} &
\includegraphics[width=\rheinwidth]{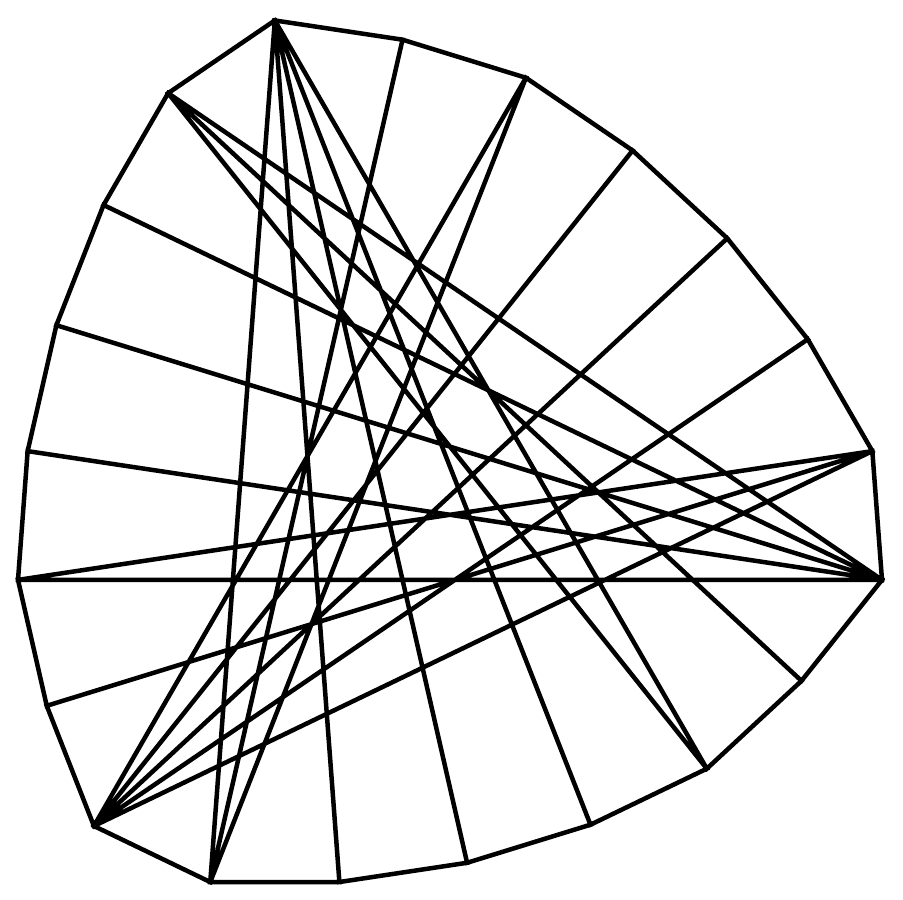}\\
\small (a) $[(7)^3]$ &
\small (b) $[(3)^7]$ &
\small (c) $[(5,1,1)^3]$ &
\small (d) $[(4,2,1)^3]$\\[12pt]
\includegraphics[width=\rheinwidth]{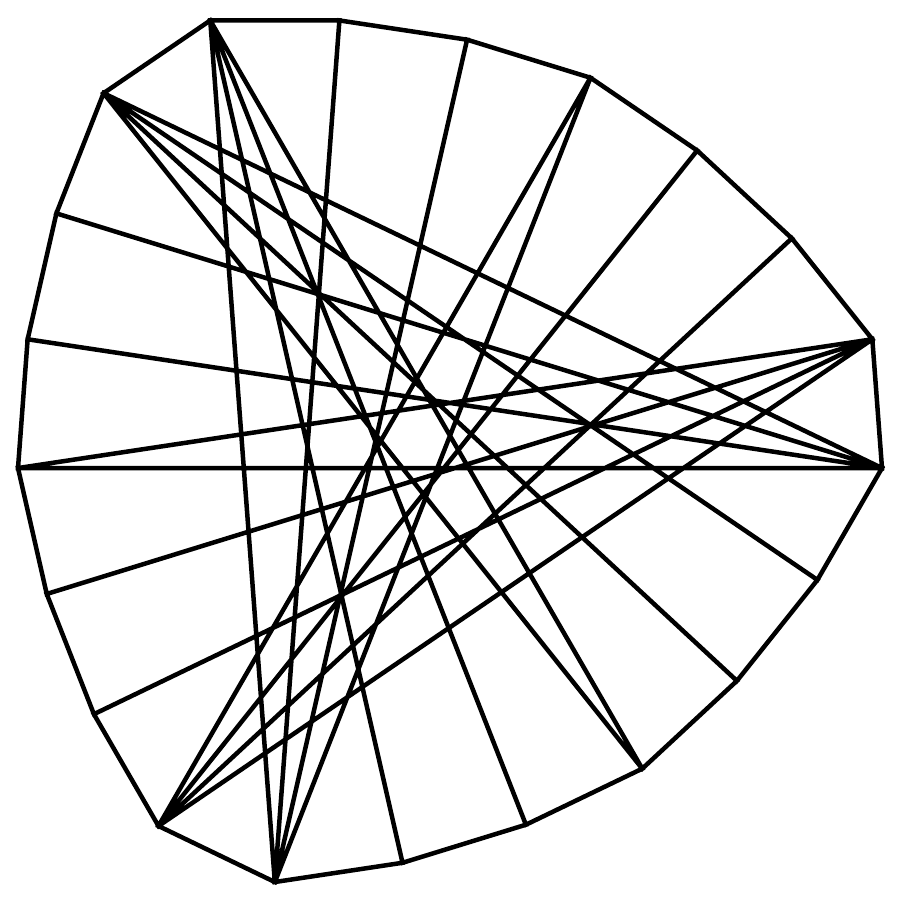} &
\includegraphics[width=\rheinwidth]{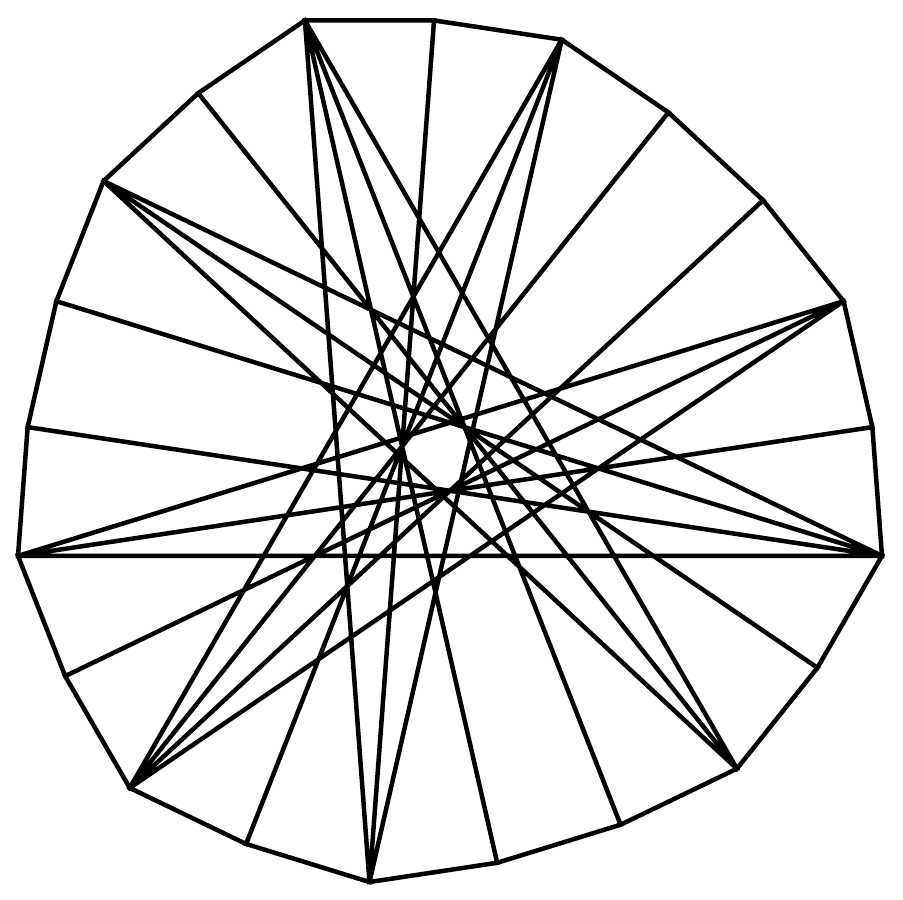} &
\includegraphics[width=\rheinwidth]{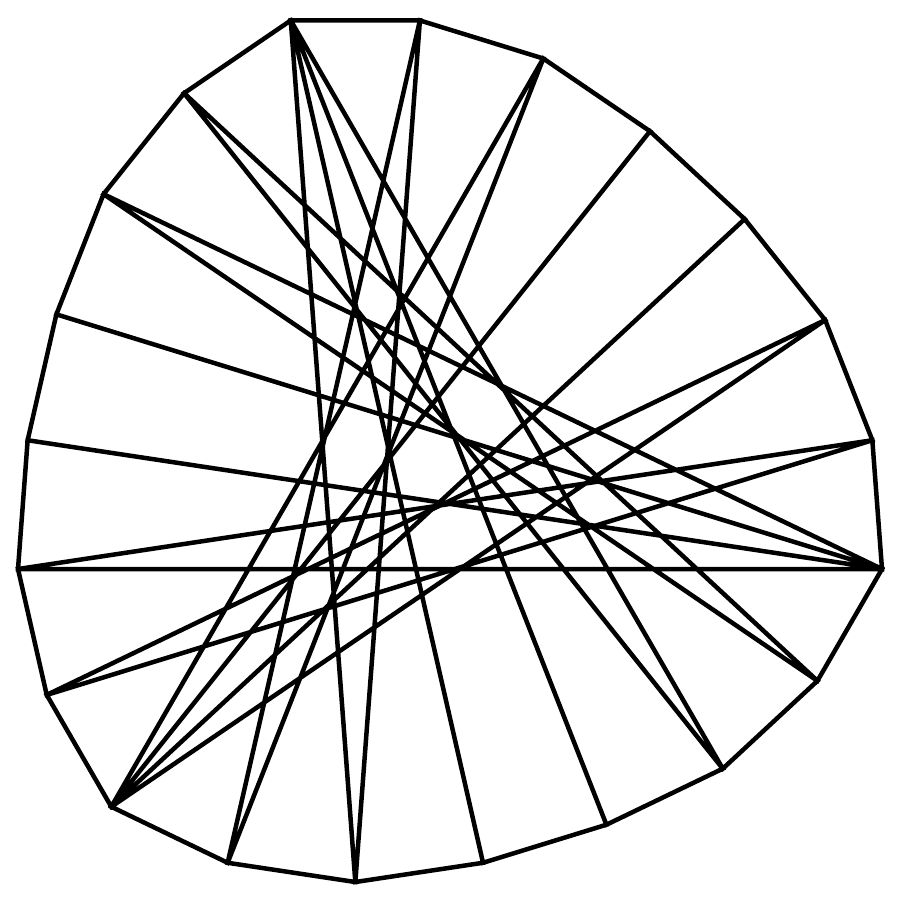} &
\includegraphics[width=\rheinwidth]{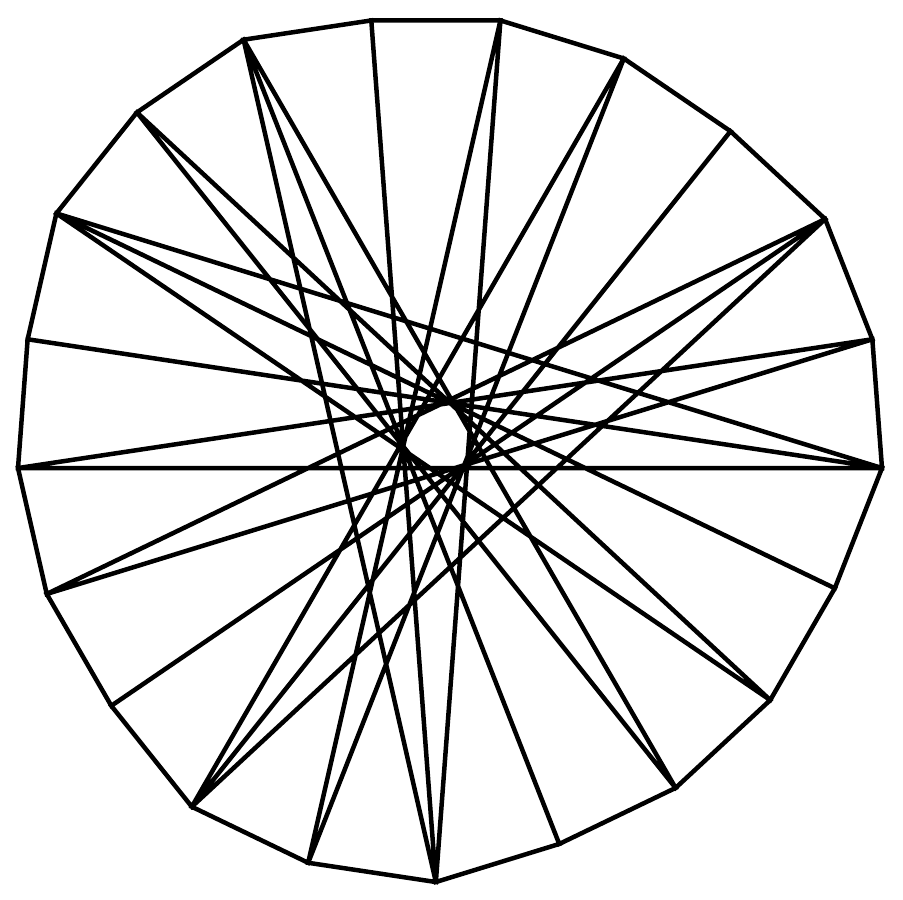}\\
\small (e) $[(3,3,1)^3]$ &
\small (f) $[(3,2,2)^3]$ &
\small (g) $[(3,1,1,1,1)^3]$ &
\small (h) $[(2,2,1,1,1)^3]$\\[12pt]
&
\includegraphics[width=\rheinwidth]{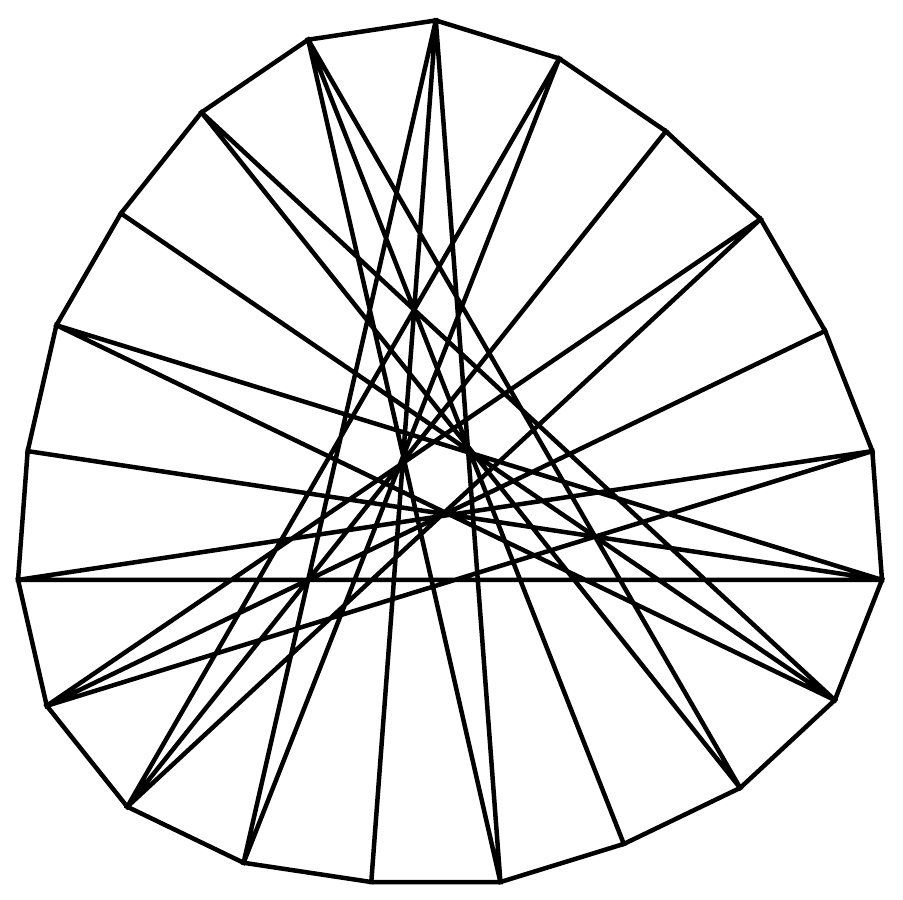} &
\includegraphics[width=\rheinwidth]{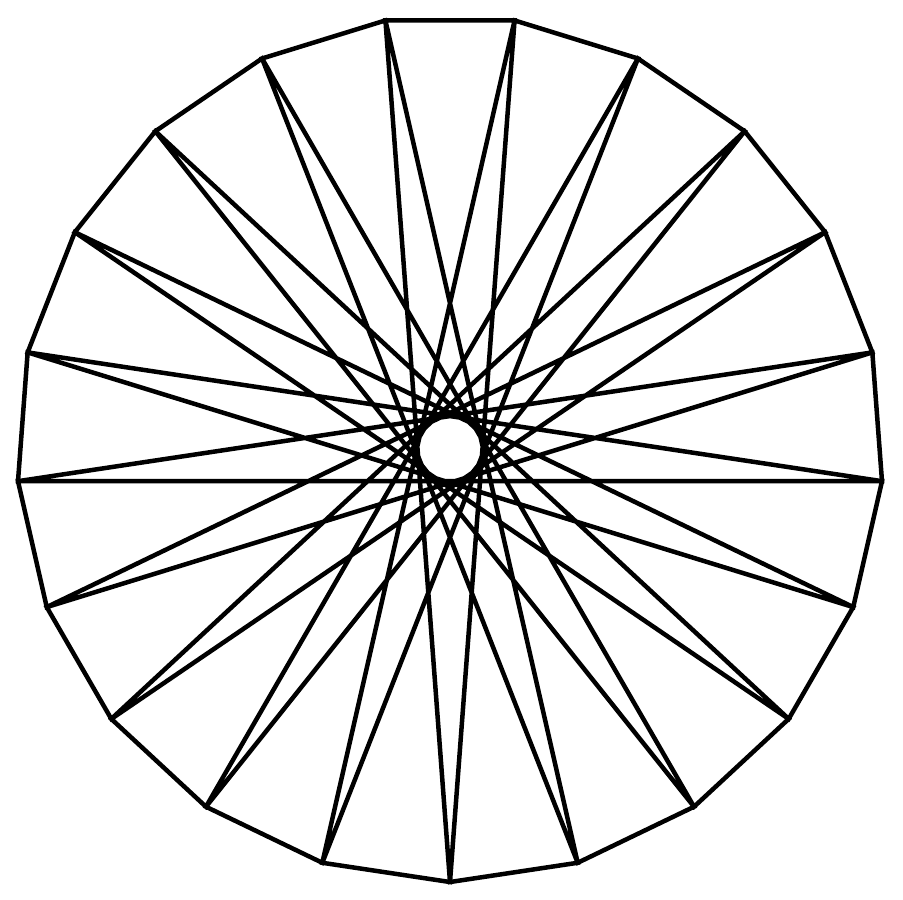}\\
&
\small (i) $[(2,1,2,1,1)^3]$ &
\small (j) $[(1)^{21}]$
\end{tabular}
\end{figure}

We can describe a Reinhardt polygon $P$ in a compact way by focusing on the star polygon $S$ associated with the Reuleaux polygon $R$ that circumscribes $P$.
Suppose that $P$ has $n$ vertices.
Vertices of $P$ that are not vertices of $R$ lie on the boundary of $R$, and since $P$ is equilateral, these vertices subdivide the circular arcs on the boundary of $R$ into subarcs of equal length.
The angle at a vertex $v$ of $S$ then has measure $k\pi/n$, where $k$ is the number of sides of $P$ that are inscribed in the arc of $R$ that lies opposite $v$.
We can thus describe a Reinhardt polygon by naming the sequence of these integers $k$ as one circumnavigates the star polygon $S$.
If $S$ has $r$ vertices, then we denote this sequence by $[k_1,k_2,\ldots,k_r]$.
For example, the sequence for the $21$-sided polygon in Figure~\ref{figAll21}(a), where the underlying star polygon is an equilateral triangle, each of whose angles is subdivided into sevenths, is $[7,7,7]$, and we abbreviate this by $[(7)^3]$.
In the same way, the regular henicosagon of Figure~\ref{figAll21}(j) is denoted by a sequence of $21$ ones: $[(1)^{21}]$.
A Reinhardt polygon with $n$ sides is therefore denoted by a particular composition of $n$ into an odd number of parts.
Naturally, we consider two such compositions to be equivalent if one can be obtained from the other by a combination of cyclic shifts and list reversals (corresponding to rotations and flips of the polygon), and so we consider equivalence classes of such compositions under a dihedral action.
We call such an equivalence class a \textit{dihedral composition}.

Not every dihedral composition of an integer $n$ into an odd number of parts corresponds to a Reinhardt polygon; an extra condition is required to ensure that the path determined by a list of integers is closed.
Reinhardt \cite{Mossinghoff06AMM,Reinhardt22} obtained a characterization for valid compositions in terms of an associated polynomial: given $[k_1,\ldots,k_r]$ with $r$ odd and $\sum_{i=1}^r k_i = n$, form the polynomial
\[
F(z) = 1 - z^{k_1} + z^{k_1+k_2} - \cdots + z^{k_1+\cdots+k_{r-1}}.
\]
Then $[k_1,\ldots,k_r]$ corresponds to a Reinhardt polygon if and only if $\Phi_{2n}(z) \mid F(z)$, where $\Phi_m(z)$ denotes the $m$th cyclotomic polynomial.
We say $F(z)$ is a \textit{Reinhardt polynomial} for $n$ if $F(0)=1$, $\deg(F)<n$, $F$ has an odd number of terms, the nonzero coefficients of $F$ alternate $\pm1$, and $\Phi_{2n}(z)\mid F(z)$.
For example, the polynomials associated with the polygons of Figures~\ref{figAll21}(a) and~\ref{figAll21}(b) are respectively $1-z^7+z^{14}$ and $1-z^3+z^6-z^9+z^{12}-z^{15}+z^{18}$.

Let $E(n)$ denote the number of Reinhardt polygons with $n$ sides, counting two polygons as distinct only if one cannot be obtained from the other by a combination of rotations and flips.
One may determine $E(n)$ by enumerating the Reinhardt polynomials for $n$; this strategy shows for example that Figure~\ref{figAll21} exhibits the complete set of $21$-sided Reinhardt polygons.
Such computations reveal that for many values of $n$, every Reinhardt polygon with $n$ sides exhibits special structure, in that the corresponding dihedral composition is periodic.
We call such a polygon a \textit{periodic} Reinhardt polygon.
From Figure~\ref{figAll21}, we see that every henicosagonal Reinhardt polygon is periodic, since each corresponding dihedral composition in this figure has the form $[(k_1,\ldots,k_s)^d]$ for some divisor $d$ of $21$.
In fact, every Reinhardt polygon with $n<30$ sides is periodic, but at $n=30$, there are $38$ periodic Reinhardt polygons, plus three that do not exhibit such structure.
We call these polygons \textit{sporadic}, and the three for $n=30$ are exhibited in Figure~\ref{figSporadic30}.
They also appear at $n=42$ and at $n=45$, but at no other integers $n<60$.

\begin{figure}[t]
\caption{Sporadic Reinhardt polygons for $n=30$.}\label{figSporadic30}
\begin{tabular}{c@{\qquad}c}\\[0pt]
\includegraphics[width=1.5in]{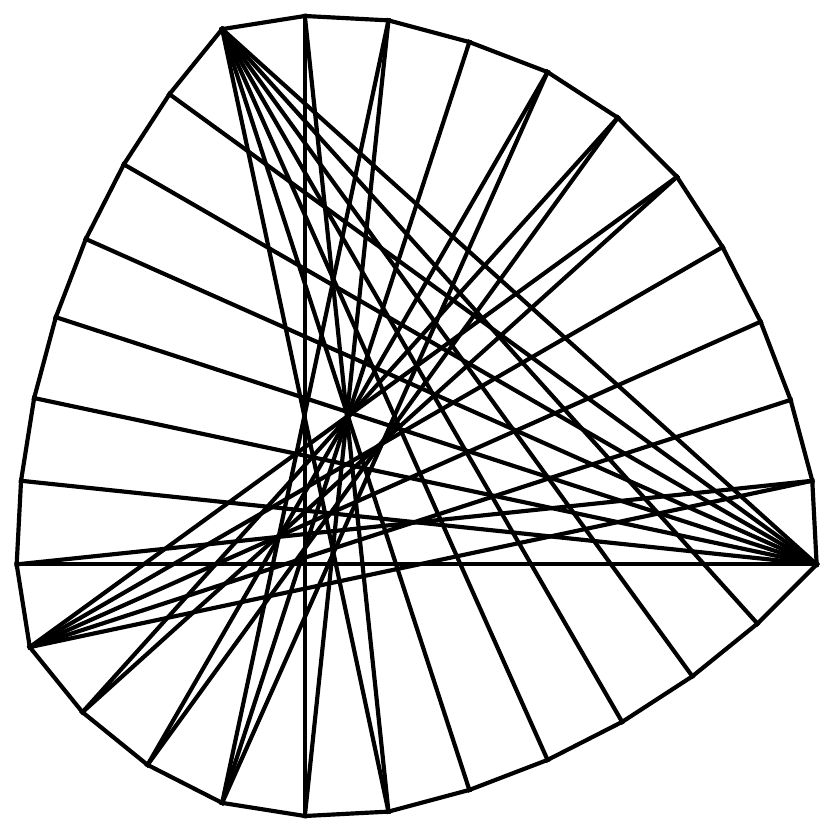} &
\includegraphics[width=1.5in]{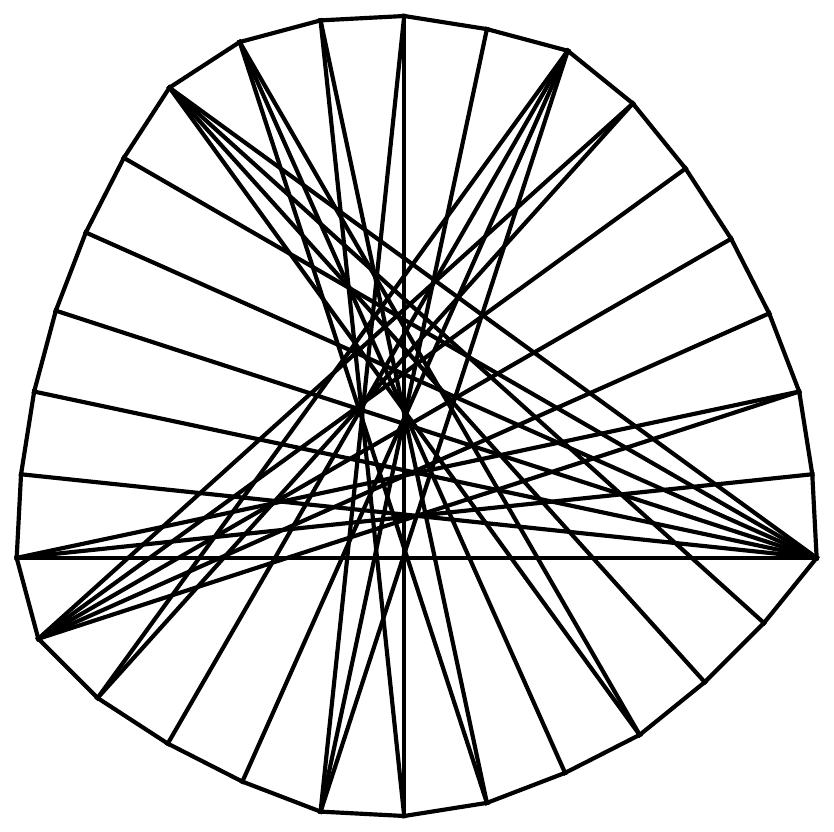}\\
(a) \small $[7,6,1,1,1,1,2,1,1,1,1,1,4,1,1]$ &
(b) \small $[6,3,1,2,1,1,1,1,2,3,1,1,4,1,2]$\\[12pt]
\multicolumn{2}{c}{\includegraphics[width=1.5in]{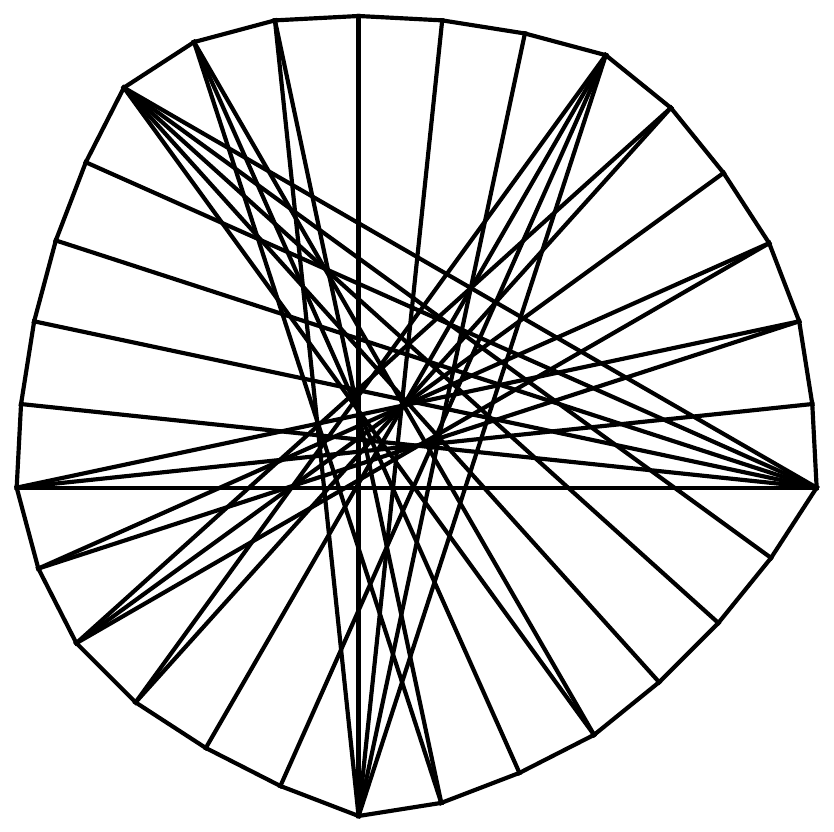}}\\
\multicolumn{2}{c}{(c) \small $[5,4,1,2,1,1,4,3,1,1,2,1,1,1,2]$}
\end{tabular}
\end{figure}

Let $E_0(n)$ denote the number of periodic Reinhardt polygons having $n$ sides, and let $E_1(n)$ denote the number of sporadic Reinhardt polygons with $n$ sides (again, with both counts taken under dihedral equivalence), so $E(n) = E_0(n)+E_1(n)$.
In \cite{Mossinghoff11}, the second author obtained a number of results regarding these quantities, including an exact value for $E_0(n)$:
\begin{equation}\label{eqnPeriodicFormula}
E_0(n) = \sum_{\substack{d\mid n\\d>1}} \mu(2d) D(n/d),
\end{equation}
where $\mu(\cdot)$ is the M\"obius function,
\[
D(m) = 2^{\lfloor(m-3)/2\rfloor} + \frac{1}{4m}\sum_{\substack{d\mid m\\2\nmid d}}2^{m/d}\varphi(d),
\]
and $\varphi(\cdot)$ is Euler's totient function.
This formula follows from some combinatorial analysis, after observing that every composition of an integer $n\geq3$ of the form $[(k_1,\ldots,k_s)^d]$ with $s$ and $d$ odd corresponds to a Reinhardt polygon.
Gashkov \cite{Gashkov07} obtained a similar result, but assuming cyclic equivalence classes instead of dihedral ones.

In \cite{Mossinghoff11}, it was also shown that $E_1(n)=0$ if $n$ has the form $n=2^a p^{b+1}$, for some odd prime $p$, where $a$ and $b$ are nonnegative integers, and that $E(n)=1$ if and only if $n=p$ or $n=2p$ for some odd prime $p$.
In addition, some computations in \cite{Mossinghoff11} indicated that $E_1(n)=0$ for $40$ different values of $n$ of the form $n=pq$, with $p$ and $q$ distinct odd primes, and that $E_1(n)>0$ for many integers having neither the form $n=pq$ nor $n=2^a p^{b+1}$.
Two problems were posed in that article: first, determine if $E_1(pq)=0$ whenever $p$ and $q$ are distinct odd primes; second, determine if $E_1(n)>0$ whenever $n=pqr$, with $p$ and $q$ distinct odd primes and $r\geq2$.
In this article, we prove both of these assertions.

\begin{thm}\label{thmPeriodic}
If $n=pq$, with $p$ and $q$ distinct odd primes, then every Reinhardt polygon with $n$ sides is periodic, that is, $E_1(n)=0$.
\end{thm}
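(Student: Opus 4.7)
Since $n=pq$ is odd, every primitive $2n$-th root of unity equals $-\omega$ for some primitive $n$-th root $\omega$, so the required divisibility $\Phi_{2n}(z)\mid F(z)$ is equivalent to $\Phi_n(z)\mid G(z)$, where $G(z):=F(-z)$. I would therefore study $G(z)=\sum_{j=0}^{n-1} c_j z^j$, whose coefficients lie in $\{-1,0,1\}$, and exploit that $n=pq$ has only two distinct prime factors.

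The key structural input is a decomposition of the coefficients of $G$. Because the integer relations among the $pq$-th roots of unity are generated by the atomic vanishings $\Phi_p(\zeta_{pq}^{\,q})=0$ and $\Phi_q(\zeta_{pq}^{\,p})=0$ together with their shifts, the condition $G(\zeta_{pq})=0$ forces $c_j=f(j\bmod p)+g(j\bmod q)$ for some $f\colon\mathbb{Z}/p\mathbb{Z}\to\mathbb{Q}$ and $g\colon\mathbb{Z}/q\mathbb{Z}\to\mathbb{Q}$, determined up to the gauge $(f,g)\mapsto(f+c,g-c)$. Because $c_j\in\{-1,0,1\}$, the sumset $f(\mathbb{Z}/p\mathbb{Z})+g(\mathbb{Z}/q\mathbb{Z})$ lies in a $3$-element subset of $\mathbb{R}$, so the elementary sumset inequality over $\mathbb{R}$ yields $|f(\mathbb{Z}/p\mathbb{Z})|+|g(\mathbb{Z}/q\mathbb{Z})|\le 4$. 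This leaves three cases: (A) $f$ is constant, (B) $g$ is constant, or (C) both $f$ and $g$ are nonconstant, each necessarily two-valued with values differing by $1$.

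In case (A) the sequence $c_j$ has period $q$, which produces the factorization $G(z)=\tilde g(z)\bigl(1+z^q+z^{2q}+\cdots+z^{(p-1)q}\bigr)$ for a polynomial $\tilde g$ of degree less than $q$. Substituting $z\mapsto -z$ and using that $p$ and $q$ are odd yields $F(z)=\tilde g(-z)\cdot(1+z^n)/(1+z^q)$, exactly the form of a periodic Reinhardt polynomial with $d=p$ repetitions of a period of total weight $q$; one then checks that $\tilde g(-z)$ satisfies the required alternation pattern within a single period. Case (B) is symmetric and yields a periodic polynomial with $d=q$.

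Case (C) is the main obstacle. After absorbing the gauge, write $f=\mathbf{1}_A$ and $g=-\mathbf{1}_B$ for proper nontrivial subsets $A\subsetneq\mathbb{Z}/p\mathbb{Z}$ and $B\subsetneq\mathbb{Z}/q\mathbb{Z}$ with $0\in A$ and $0\notin B$. Using that $p$ is odd one computes
\[
F(z)=\sum_{t}(-1)^{t}\bigl(\mathbf{1}_A(t\bmod p)-\mathbf{1}_B(t\bmod q)\bigr)z^{t},
\]
so the Reinhardt sign-alternation condition translates into a parity constraint on the sorted support $s_0<s_1<\cdots<s_{r-1}$: setting $S_-:=\{t:t\bmod p\notin A,\ t\bmod q\in B\}$, one needs $s_i+\mathbf{1}_{S_-}(s_i)\equiv i\pmod 2$ for every $i$. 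The heart of the proof is to show that this parity condition, which intertwines congruences modulo $2$, $p$, and $q$, cannot be satisfied for nontrivial $A$ and $B$, and must force $A=\mathbb{Z}/p\mathbb{Z}$ or $B=\emptyset$, contradicting the standing hypotheses of case (C). The primality of both $p$ and $q$ is essential here, consistent with the paper's converse showing that sporadic polygons do appear as soon as $n=pqr$ with $r\ge 2$; the delicate combinatorial elimination of case (C) is therefore expected to be the longest and most technical piece of the argument, whereas the vanishing-sum decomposition and cases (A) and (B) are comparatively formal.
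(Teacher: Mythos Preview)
Your reduction is sound and essentially parallels the paper's. The additive decomposition $c_j=f(j\bmod p)+g(j\bmod q)$ is exactly the paper's Lemma writing $F(z)=f_1(z)\Phi_p(-z^q)+f_2(z)\Phi_q(-z^p)$ (after the substitution $z\mapsto -z$), and your sumset inequality $|\mathrm{im}\,f|+|\mathrm{im}\,g|\le 4$ is a clean alternative to the paper's CRT chase for forcing the $f_i$ to have $\{-1,0,1\}$ coefficients. Cases (A) and (B) are then routine in both treatments.

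The gap is case (C), which is not a detail but the whole theorem. You have correctly formulated the parity constraint that the alternating-sign rule imposes on the sorted support, and correctly identified nontrivial proper $A\subsetneq\mathbb{Z}/p\mathbb{Z}$ and $B\subsetneq\mathbb{Z}/q\mathbb{Z}$ as the unknowns; but you offer no mechanism for ruling them out. Saying the step is ``delicate'' and that primality ``is essential'' is an observation, not an argument. The paper's mechanism is concrete: a preliminary parity lemma locates, in each of the two summands, a pair of indices of the same parity---one where that summand's coefficient is $1$, one where it is $0$---with an \emph{even} number of nonzero coefficients strictly between them; then a Chinese-remainder construction modulo $2pq$ manufactures three explicit positions $e_{i,k}$, $e_{i,\ell}$, $e_{j,k}$ at which the values of (a harmless $\pm$-periodic multiple of) $F$, together with the parities of the nonzero-counts between them, force a violation of the alternating-sign pattern. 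Until you supply an argument of comparable force for case (C), the proposal is a correct setup followed by a restatement of the goal, not a proof.
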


Using Theorem~\ref{thmPeriodic} and \eqref{eqnPeriodicFormula}, we therefore obtain an exact formula for the number of Reinhardt polygons with $n$ sides (under dihedral equivalence) when $n$ is a product of two distinct odd primes:
\begin{equation}\label{eqnFormulaPQ}
E(pq) = 2^{(p-3)/2} + \frac{2^{p-1}+p-1}{2p}
+ 2^{(q-3)/2} + \frac{2^{q-1}+q-1}{2q}
-1.
\end{equation}
Section~\ref{sec:prob1} describes the proof of Theorem~\ref{thmPeriodic}.
It relies on the structure of a principal ideal in $\mathbb{Z}[z]$ generated by a cyclotomic polynomial.

\begin{thm}\label{thmSporadic}
If $n=pqr$, with $p$ and $q$ distinct odd primes and $r\geq2$, then there exists a sporadic Reinhardt polygon with $n$ sides, that is, $E_1(n)>0$.
\end{thm}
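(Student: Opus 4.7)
The plan is to exhibit, for each $n=pqr$ satisfying the hypothesis, an explicit Reinhardt polynomial $F(z)\in\mathbb{Z}[z]$ whose associated dihedral composition is not periodic. I first recall that a Reinhardt polynomial $F(z)$ for $n$ encodes a periodic polygon precisely when it is divisible in $\mathbb{Z}[z]$ by $(z^n+1)/(z^{n/d}+1)$ for some odd divisor $d>1$ of $n$. Because $(z^n+1)/(z^{n/d}+1)$ factors (when $n$ is odd) as $\prod_{e\mid n,\,e\nmid n/d}\Phi_{2e}(z)$, sporadicity amounts to asserting that for every odd divisor $d>1$ of $n$ there is some $e\mid n$ with $e\nmid n/d$ such that $\Phi_{2e}(z)\nmid F(z)$.

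I would then introduce an explicit construction. The key observation is that $n=pqr$ admits several periodic scaffolds $(z^n+1)/(z^{n/d}+1)$, one for each odd divisor $d>1$ of $n$, giving genuinely more flexibility than in the two-prime case $n=pq$ handled by Theorem~\ref{thmPeriodic}. Concretely, I would build $F(z)$ as an integer combination of two Reinhardt polynomials for $n$ that come from distinct periodic scaffolds---say one of period $qr$ and one of period $pr$---chosen so that the combination still has alternating $\pm 1$ coefficients and an odd number of nonzero terms. Because both summands already lie in the ideal $(\Phi_{2n}(z))\subseteq\mathbb{Z}[z]$, divisibility of $F(z)$ by $\Phi_{2n}(z)$ is automatic; the remaining Reinhardt polynomial conditions (constant term $1$, degree less than $n$, and strict sign alternation) then reduce to a direct inspection of the coefficients of the combination.

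The main obstacle is the non-periodicity verification. Since $n$ has only finitely many odd divisors, this is a finite case analysis: for each odd divisor $d>1$ of $n$, one selects some $e\mid n$ with $e\nmid n/d$ and exhibits a primitive $2e$-th root of unity at which $F(z)$ does not vanish. The delicate point is designing the combination so that this verification goes through uniformly across the several shapes of $r$---$r$ coprime to $pq$, $r$ a power of $p$ or of $q$, $r$ a power of $2$, or mixed---while simultaneously preserving the alternating $\pm 1$ coefficient pattern. A small case split according to $\gcd(r,pq)$ and the parity of $r$ may be needed, but in each case the three-prime-factor hypothesis provides exactly the slack required to break every periodicity of $F(z)$ without destroying its Reinhardt structure, in sharp contrast to the obstruction for $n=pq$ noted in Theorem~\ref{thmPeriodic}.
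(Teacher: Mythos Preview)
Your high-level strategy matches the paper's: the authors also write $F(z)=f_1(z)\Phi_q(-z^{pr})+f_2(z)\Phi_p(-z^{qr})$ as a sum of a $pr$-periodic piece and a $qr$-periodic piece, so that divisibility by $\Phi_{2n}(z)$ is free. But your proposal stops precisely where the actual work begins. You never specify $f_1$ and $f_2$, and you describe the verification of the alternating $\pm1$ pattern as ``a direct inspection of the coefficients of the combination.'' For a generic choice of the two periodic pieces this inspection will simply fail: adding two alternating-sign sequences of incommensurate periods typically produces coefficients outside $\{-1,0,1\}$ or destroys alternation. The paper's genuine contribution is an explicit design that avoids this. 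They take $f_1(z)=1-z$, so that $g_1=f_1\Phi_q(-z^{pr})$ contributes only isolated $(+1,-1)$ pairs at positions $kpr,kpr+1$; and they build $f_2$ out of length-$r$ blocks $A_1,B_1,\ldots,A_t,B_t,C$ with prescribed leading/trailing signs, arranged so that every nonzero pair from $g_1$ lands exactly on a block boundary of $g_2$. The alternation check then becomes a short local case analysis at each boundary. This is the missing idea, and without it there is nothing to inspect.

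A second, smaller gap: you anticipate a case split on $\gcd(r,pq)$ and the parity of $r$. The paper's construction is uniform in $r\ge 2$; no such split is needed. Their non-periodicity argument also does not proceed by evaluating at roots of unity as you suggest, but by working directly with the coefficient recurrence $u_k=-u_{k+d}$ and ruling out each prime $m=n/d\in\{p,q\}\cup\{\text{odd primes dividing }r\}$ via the block layout. Your root-of-unity approach is in principle workable, but only once an explicit $F$ is on the table; as written, your proposal is a plan for a proof rather than a proof.
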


Section~\ref{secBuildingPolygons} contains the proof of Theorem~\ref{thmSporadic}.
This proof is constructive, describing a method for creating a sizable family of sporadic Reinhardt polygons for any qualifying integer $n$.

In \cite{Mossinghoff11}, the exact value of $E_1(n)$ was computed for $24$ different integers $n$ having the form specified in Theorem~\ref{thmSporadic}, and in each of these cases it was found that $E_0(n) > E_1(n)$, and often $E_0(n)$ was in fact several orders of magnitude larger than $E_1(n)$.
Despite this, it was conjectured in \cite{Mossinghoff11} that $E_1(n)>E_0(n)$ for almost all positive integers.
We obtain some information on the size of $E_1(n)$ in this article, showing that, in a particular sense, a positive proportion of the Reinhardt polygons with $n$ sides are sporadic, for almost all $n$.
The following result is also established in Section~\ref{secBuildingPolygons}.

\begin{thm}\label{thmNumber}
Suppose $p$ and $q$ are fixed odd primes with $p<q$, and let $\epsilon>0$.
Then for all sufficiently large integers $r$ having no odd prime divisor less than $p$, we have
\[
\frac{E_1(pqr)}{E(pqr)} > \frac{2^p - 2}{p2^q + 2^p - 2} - \epsilon.
\]
\end{thm}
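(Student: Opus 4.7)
My plan is to combine a sharp asymptotic upper bound on $E_0(pqr)$ coming directly from \eqref{eqnPeriodicFormula} with a matching lower bound on $E_1(pqr)$ extracted from the construction used in Section~\ref{secBuildingPolygons} to prove Theorem~\ref{thmSporadic}; the desired ratio then follows from the monotonicity of $a\mapsto a/(1+a)$.

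First I would establish that $E_0(pqr) = D(qr)\bigl(1+o(1)\bigr)$ as $r\to\infty$. The inner sum in the formula for $D(m)$ is dominated by its $d=1$ term, so a direct estimate gives $D(m) = \frac{2^m}{4m}\bigl(1+O(2^{-m/3})\bigr)$. In \eqref{eqnPeriodicFormula} only odd squarefree divisors $d>1$ contribute, since $\mu(2d)=0$ otherwise; by the hypothesis on $r$, the smallest odd prime divisor of $pqr$ is $p$ itself, and $\mu(2p)=1$, so the leading contribution is $D(qr)$. The next largest term is $\mu(2q)D(pr) = D(pr)$, which is smaller than $D(qr)$ by the exponential factor $2^{-(q-p)r}$ because $p<q$, and all remaining terms are of strictly lower exponential order.

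Second, I would extract a lower bound on $E_1(pqr)$ from the construction of Section~\ref{secBuildingPolygons}, which I expect to produce sporadic Reinhardt polygons for $pqr$ parameterized by two pieces of data: an aperiodic binary pattern of length $p$, contributing roughly $(2^p-2)/p$ choices modulo cyclic rotation by $p$, together with a ``base'' datum controlling the $qr$-scale of the polygon, contributing at least $D(qr)/2^q$ inequivalent options (the denominator $2^q$ absorbing identifications inherent in the construction). After checking that distinct input data yield dihedrally inequivalent sporadic polygons and that the resulting polynomials satisfy the Reinhardt conditions, this gives
\[
E_1(pqr) \;\geq\; \frac{2^p-2}{p\cdot 2^q}\,D(qr)\bigl(1-o(1)\bigr).
\]

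Combining the two estimates gives $E_1/E_0 \geq (2^p-2)/(p\cdot 2^q) - o(1)$, and since $a\mapsto a/(1+a)$ is increasing on $[0,\infty)$,
\[
\frac{E_1(pqr)}{E(pqr)} \;=\; \frac{E_1/E_0}{1+E_1/E_0} \;\geq\; \frac{(2^p-2)/(p\cdot 2^q)}{1+(2^p-2)/(p\cdot 2^q)} - o(1) \;=\; \frac{2^p-2}{p\cdot 2^q + 2^p - 2} - o(1),
\]
which exceeds the stated bound minus $\epsilon$ once $r$ is sufficiently large. The main obstacle is the second step: showing that the construction of Section~\ref{secBuildingPolygons} really yields a lower bound with exactly the leading constant $(2^p-2)/(p\cdot 2^q)$, and in particular that dihedral collisions among the constructed polygons reduce the raw count by no more than this factor. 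The first step, by contrast, is a routine dominated-term analysis once $D(m)\sim 2^m/(4m)$ is in hand.
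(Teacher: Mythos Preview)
Your overall architecture matches the paper's: bound $E_0$ above asymptotically, bound $E_1$ below via the Section~\ref{secBuildingPolygons} construction, then pass to $E_1/E$ via $a\mapsto a/(1+a)$. Your first step is correct and is exactly how the paper proceeds (it cites \cite[Cor.~2]{Mossinghoff11} for $E_0(pqr)\sim 2^{qr}/(4qr)$, which is your $D(qr)(1+o(1))$).

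Your second step, however, is not quite the paper's construction and as stated would be hard to make work. You parametrize the construction as ``$(2^p-2)/p$ patterns times $D(qr)/2^q$ base data,'' as though the dihedral action factors through separate actions on the two pieces. The paper does not do this. It generalizes the proof of Theorem~\ref{thmSporadic} by replacing $f_1(z)=1-z$ with $f_1(z)=\sum_{s\in S}(-1)^s z^{rs}(1-z)$ for a nontrivial proper subset $S\subseteq\{0,\ldots,p-1\}$, giving $2^p-2$ choices, while $f_2(z)$ keeps the same explicit $A_i,B_i,C$ block structure, giving $2^{q(r-1)-1}$ choices; the $f_2$ data is \emph{not} indexed by anything like dihedral classes counted by $D(qr)$. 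One then checks that distinct $(f_1,f_2)$ give distinct $F$ (a short periodicity argument), subtracts the $(2^p-2)2^{r-2}$ periodic outputs, and only at the very end divides crudely by $2pqr$ to pass to dihedral classes. This yields
\[
E_1(pqr)\;\ge\;(2^p-2)\Bigl(\tfrac{2^{q(r-1)-1}}{2pqr}-2^{r-2}\Bigr),
\]
which matches your target constant $(2^p-2)/(p\,2^q)$ against $D(qr)\sim 2^{qr}/(4qr)$. So your anticipated constant is right, but the mechanism producing it---a raw polynomial count followed by a global $1/2n$---is different from the factored picture you sketched, and the latter would require justification you have not supplied.
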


Section~\ref{secConstructions} explores the precise number of sporadic Reinhardt polygons produced by our method, and compares this number with the exact value of $E_1(n)$, for the $24$ different integers $n$ where this is known.
We find for example that our method constructs all of the sporadic polygons for some of these values.
This section also treats the case $n=105$ in some detail, and here our method produces far more Reinhardt polygons than what is suggested by the bound of Theorem~\ref{thmNumber}.
It was posited in \cite{Mossinghoff11} that $n=105$ is the smallest integer where $E_1(n)$ exceeds $E_0(n)$.
Our calculations here provide some further empirical evidence for this assertion.

\section{Proof of Theorem~\ref{thmPeriodic}}\label{sec:prob1}

A theorem of de Bruijn \cite{deBruijn53} (see also \cite{Redei54,Schoenberg64}) states that the principal ideal generated by the cyclotomic polynomial $\Phi_m(z)$ in $\mathbb{Z}[z]$ is generated by the collection of polynomials $\{\Phi_p(z^{m/p}) : \textrm{$p$ is prime and $p\mid m$}\}$.
Note that each term in this generating set is certainly a multiple of $\Phi_m(z)$, since
\[
\Phi_p(z^{m/p}) = \prod_{\substack{d\mid m\\d\nmid\frac{m}{p}}}\Phi_d(z).
\]
Thus, if $\{p_1, \ldots, p_r\}$ are the odd prime divisors of a (possibly even) positive integer $n$, and if $\Phi_{2n}(z)\mid F(z)$, then there exist integer polynomials $f_0(z)$, \ldots, $f_r(z)$ such that
\[
F(z) = f_0(z)(z^n+1) + \sum_{i=1}^r f_i(z) \Phi_{p_i}(z^{2n/p_i}).
\]
However, since
\[
\Phi_{p_i}(z^{2n/p_i}) - z^{n/p_i}(z^n+1)\frac{z^{(p_i-1)n/p_i}-1}{z^{2n/p_i}-1} = \Phi_{p_i}(-z^{n/p_i}),
\]
then an equivalent condition for $\Phi_{2n}(z)\mid F(z)$ is the existence of polynomials $f_i(z)$ such that
\begin{equation}\label{eqnCycGen}
F(z) = f_0(z)(z^n+1) + \sum_{i=1}^r f_i(z) \Phi_{p_i} (-z^{n/p_i}).
\end{equation}
Suppose $F(z)$ is the Reinhardt polynomial corresponding to a periodic Reinhardt polygon with $n$ sides, which arises from a composition of $n$ of the form $[(k_1,\ldots,k_s)^d]$, where $d$ and $s$ are odd and $d\geq3$.
Let $m=\sum_{i=1}^s k_i$.
Then
\[
F(z) = f(z)\sum_{i=0}^{d-1} (-1)^i z^{mi},
\]
where $f(0)=1$, $\deg(f)<m$, and the $s$ nonzero coefficients of $f$ alternate $\pm1$.
Select $j$ so that the odd prime divisor $p_j$ of $n$ divides $d$, and let $e=d/p_j$.
Then
\[
F(z) = f(z)\biggl(\sum_{i=0}^{e-1}(-1)^i z^{mi}\biggr)\Phi_{p_j}(-z^{n/p_j}),
\]
and this has the form of \eqref{eqnCycGen} if one takes $f_j(z)=f(z)\sum_{i=0}^{e-1}(-1)^i z^{mi}$, and every other $f_i(z)=0$.
Conversely, if $F(z)$ is a polynomial formed by using \eqref{eqnCycGen} with each $f_i(z)=0$ except for one with positive index $j$, and taking this polynomial $f_j(z)$ to have alternating $\pm1$ nonzero coefficients, an odd number of terms, $f_j(0)=1$, and $\deg(f_j)<n/p_j$, then clearly $F(z)$ corresponds to a periodic Reinhardt polygon for $n$.
Thus, the polygons corresponding to the Reinhardt polynomials in \eqref{eqnCycGen} in which each $f_i(z)=0$ except for one with positive index are precisely the periodic Reinhardt polygons for $n$.

In order to establish Theorem~\ref{thmPeriodic}, we therefore need only prove that every Reinhardt polynomial for $n=pq$, where $p$ and $q$ are distinct odd primes, can be represented by using just the $i=1$ or the $i=2$ term of \eqref{eqnCycGen}.
We may state this requirement in a compact way as a divisibility condition on $F(z)$.
Theorem~\ref{thmPeriodic} then follows immediately from the following proposition.

\begin{prop}\label{prop:pqRep}
Let $n = p q$ with $p$ and $q$ distinct odd primes. 
If $F(z)$ is a Reinhardt polynomial for $n$, then either $\Phi_p(-z^q)\mid F(z)$ or $\Phi_q(-z^p)\mid F(z)$.
\end{prop}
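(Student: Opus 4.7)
The plan is to translate the divisibility into conditions on $\Phi_{2p}$ and $\Phi_{2q}$, normalize the de Bruijn representation until $f_0$ is forced to vanish, and then carry out a coefficient analysis driven by the alternating $\pm 1$ structure of $F$. A direct root analysis yields the factorizations $\Phi_p(-z^q) = \Phi_{2p}(z)\Phi_{2n}(z)$ and $\Phi_q(-z^p) = \Phi_{2q}(z)\Phi_{2n}(z)$: a root of $\Phi_p(-z^q)$ forces $z^q$ to be a primitive $2p$-th root of unity, and since $\gcd(q, 2p) = 1$ the resulting orders for $z$ are exactly $2p$ or $2n$. Combining these factorizations with the hypothesis $\Phi_{2n}(z) \mid F(z)$, the proposition becomes equivalent to showing that \emph{either $\Phi_{2p}(z) \mid F(z)$ or $\Phi_{2q}(z) \mid F(z)$.}

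From the de Bruijn decomposition
\[
F(z) = f_0(z)(z^n + 1) + f_1(z)\Phi_p(-z^q) + f_2(z)\Phi_q(-z^p)
\]
displayed in the excerpt, the identities $\Phi_p(-z^q)(z^q + 1) = z^n + 1 = \Phi_q(-z^p)(z^p + 1)$ let me replace $f_1$ by its remainder modulo $z^q + 1$ and $f_2$ by its remainder modulo $z^p + 1$, absorbing the quotients into $f_0$. After this normalization $\deg f_1 < q$ and $\deg f_2 < p$, so each summand on the right has degree less than $n$; combined with $\deg F < n$, this forces $f_0 = 0$. Reducing the resulting identity modulo $\Phi_{2p}(z)$ annihilates the first summand and sends $\Phi_q(-z^p)$ to $\Phi_q(1) = q$ (since $z^p \equiv -1$), giving $F \equiv qf_2 \pmod{\Phi_{2p}}$. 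Hence $\Phi_{2p} \mid F$ is equivalent to $\Phi_{2p} \mid f_2$, and because $\deg f_2 < p = \deg\Phi_{2p} + 1$ this in turn means $f_2$ is a scalar multiple of $\Phi_{2p}$. The symmetric statement holds for $\Phi_{2q}$, so the proposition reduces to showing that $f_1 \in \mathbb{Z}\,\Phi_{2q}$ or $f_2 \in \mathbb{Z}\,\Phi_{2p}$.

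The main obstacle is this last reduction. Writing each $j \in \{0, \dots, n-1\}$ as $j = kq + i = \ell p + i'$ with $0 \le i < q$ and $0 \le i' < p$ (the two decompositions being related by the Chinese remainder bijection), we obtain
\[
[z^j]F = (-1)^k\,[z^i]f_1 + (-1)^\ell\,[z^{i'}]f_2,
\]
so the coefficients of $F$ fill a $p \times q$ table formed by the outer sum of a signed version of the coefficient sequence of $f_1$ and a signed version of the coefficient sequence of $f_2$. The Reinhardt conditions---every entry lies in $\{-1, 0, 1\}$, the nonzero entries alternate in sign when scanned in order of $j$, their total count is odd, and the $j = 0$ entry equals $1$---are extremely restrictive. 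I expect to prove that unless one of the two coefficient sequences matches the alternating signature of $\Phi_{2q}$ or $\Phi_{2p}$ (so that its contribution collapses to a constant multiple of $(z^n+1)/(z+1)$), the required alternation of signs in $F$ must fail at some position. The case $n = pq$ is critical because with exactly two odd prime factors the $p \times q$ decomposition is tight, leaving no room for a mixed contribution from $f_1$ and $f_2$ to produce a valid Reinhardt polynomial.
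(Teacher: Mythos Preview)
Your reduction is correct and largely parallels the paper's setup: the existence of $f_1, f_2$ with $\deg f_1 < q$, $\deg f_2 < p$, the elimination of $f_0$, and the equivalence of $\Phi_p(-z^q)\mid F$ with $f_2$ being a multiple of $\Phi_{2p}$ are all established in essentially the same way (compare Lemma~\ref{lemDecomp}). Your coefficient formula $[z^j]F = (-1)^k[z^i]f_1 + (-1)^\ell[z^{i'}]f_2$ via the Chinese remainder indexing is exactly the device the paper exploits.

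The genuine gap is that you stop at ``I expect to prove.'' The entire difficulty of the proposition lies in the combinatorial argument you have not carried out: showing that if neither $f_1$ nor $f_2$ collapses to a multiple of $\Phi_{2q}$ or $\Phi_{2p}$, the alternating-sign condition on $F$ must be violated. This is not automatic from the $p\times q$ table structure. The paper first has to normalize $f_1, f_2$ so that all their coefficients lie in $\{-1,0,1\}$ (itself requiring a nontrivial case analysis in Lemma~\ref{lemDecomp}), and then proves a separate parity lemma (Lemma~\ref{lemParity}) guaranteeing, in any nontrivial decomposition, indices $i<j$ with $i\equiv j\bmod 2$, $s_i=1$, $s_j=0$, and an \emph{even} number of nonzero $s_k$ strictly between them. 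Only with such indices in hand for both sequences does the paper locate, via a mod-$2p$/mod-$2q$ Chinese-remainder construction on the fourfold extension $\hat F(z)=(1-z^{pq}+z^{2pq}-z^{3pq})F(z)$, three specific positions $e_{i,k}, e_{i,\ell}, e_{j,k}$ whose values, together with the parity of the intervening nonzero coefficients, force a sign-alternation contradiction. None of this machinery appears in your proposal; you have correctly identified the battlefield but not fought the battle.
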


In order to establish this statement, we require two preliminary results.

\begin{lem}\label{lemDecomp}
Let $n = p q$ with $p$ and $q$ distinct odd primes, and suppose that $F(z)$ is a Reinhardt polynomial
for $n$.
Then there exist polynomials $f_1(z)$ and $f_2(z)$ with integer coefficients, $\deg(f_1) < q$, $\deg(f_2) < p$, and
\begin{equation}\label{eqnDecomp}
F(z) = f_1(z) \Phi_p(-z^q) + f_2(z) \Phi_q(-z^p).
\end{equation}
Further, we may choose $f_1(z)$ and $f_2(z)$ to have all their coefficients in $\{-1,0,1\}$.
\end{lem}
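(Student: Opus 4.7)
The plan is first to establish the existence of an integer decomposition satisfying the degree bounds, and then to shift within the one-parameter family of integer solutions so that all coefficients land in $\{-1,0,1\}$. For the first part, note that $G_1=\Phi_{2p}(z)\Phi_{2n}(z)$ and $G_2=\Phi_{2q}(z)\Phi_{2n}(z)$ (by the factorisation of $1+z^n$ discussed above), so writing $F=g\,\Phi_{2n}$ with $g\in\mathbb{Z}[z]$ of degree less than $p+q-1$, the target identity becomes $g=f_1\Phi_{2p}+f_2\Phi_{2q}$. Since $q/p$ is not a prime power, $\mathrm{Res}(\Phi_{2p},\Phi_{2q})=\pm 1$, so $\Phi_{2p}$ and $\Phi_{2q}$ are coprime in $\mathbb{Z}[z]$ and B\'ezout provides $A,B\in\mathbb{Z}[z]$ with $A\Phi_{2p}+B\Phi_{2q}=1$. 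Euclidean division of $gA$ by the monic polynomial $\Phi_{2q}$ yields $gA=h\Phi_{2q}+f_1$ with $\deg f_1<q-1$; setting $f_2:=gB+h\Phi_{2p}$ then gives $g=f_1\Phi_{2p}+f_2\Phi_{2q}$, and the identity $f_2\Phi_{2q}=g-f_1\Phi_{2p}$ forces $\deg f_2<p$.

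For the coefficient bound, the key is to exploit the alternation of $F$ through a separability identity. Write $f_1=\sum_{j=0}^{q-1}a_jz^j$, $f_2=\sum_{I=0}^{p-1}b_Iz^I$, and $F=\sum_{k=0}^{n-1}c_kz^k$. Direct expansion of $f_1G_1+f_2G_2$ gives $c_k=(-1)^{\lfloor k/q\rfloor}a_{k\bmod q}+(-1)^{\lfloor k/p\rfloor}b_{k\bmod p}$ for $0\le k<n$. Because $p$ and $q$ are odd, $\lfloor k/q\rfloor+(k\bmod q)\equiv k\pmod 2$ and similarly for $p$, so setting $A_j:=(-1)^ja_j$ and $B_I:=(-1)^Ib_I$ collapses the identity to
\[
(-1)^kc_k=A_{k\bmod q}+B_{k\bmod p}.
\]
By the Chinese remainder theorem, $k\mapsto(k\bmod p,\,k\bmod q)$ is a bijection onto $\{0,\dots,p-1\}\times\{0,\dots,q-1\}$, so the Reinhardt condition $c_k\in\{-1,0,1\}$ forces $A_j+B_I\in\{-1,0,1\}$ for every pair $(I,j)$.

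Finally, the integer kernel of $(f_1,f_2)\mapsto f_1G_1+f_2G_2$ under our degree constraints is $\mathbb{Z}\cdot(\Phi_{2q},-\Phi_{2p})$, and a shift by $t\cdot(\Phi_{2q},-\Phi_{2p})$ with $t\in\mathbb{Z}$ acts on $(A,B)$ as $(A_j,B_I)\mapsto(A_j+t,\,B_I-t)$. Requiring $A_j+t,\,B_I-t\in\{-1,0,1\}$ for every $j,I$ places $t$ in the intersection of finitely many length-$2$ intervals with integer endpoints, and this intersection is nonempty precisely when
\[
\max A+\max B\le 1,\ \min A+\min B\ge -1,\ \max A-\min A\le 2,\ \max B-\min B\le 2,
\]
each of which is immediate from $A_j+B_I\in\{-1,0,1\}$. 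An integer $t$ of the required form therefore exists, and after the shift the pair $(f_1,f_2)$ has all coefficients in $\{-1,0,1\}$. The main obstacle is this last step: with only a single integer parameter against $p+q$ coefficient constraints, the argument genuinely needs the separability identity above, which in turn leans on both the Reinhardt alternation and the parity of $p$ and $q$; the initial integer decomposition is essentially routine once $\mathrm{Res}(\Phi_{2p},\Phi_{2q})=\pm 1$ is in hand.
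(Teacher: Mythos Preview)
Your argument is correct. The first half---reducing to a B\'ezout identity between $\Phi_{2p}$ and $\Phi_{2q}$ after factoring out $\Phi_{2n}$, then forcing the degree bounds by Euclidean division---is essentially the paper's own route (the paper runs the Euclidean algorithm explicitly on $\Phi_p$ and $\Phi_q$ rather than citing the resultant formula, but this is cosmetic).

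The second half, however, is genuinely different and cleaner than the paper's. The paper normalises via the shift $(\Phi_{2q},-\Phi_{2p})$ so that $s_0=1$, $t_0=0$, then uses Chinese-remainder index manipulations to force $s_i\in\{-1,0,1\}$, and finally runs a three-way case split (some $s_i=0$; all $s_i\neq 0$ with $F$ never vanishing; all $s_i\neq 0$ with a cancellation), each case handled by a separate ad hoc argument. Your separability identity $(-1)^k c_k = A_{k\bmod q}+B_{k\bmod p}$ absorbs all the sign bookkeeping at once, turns the CRT step into the single clean statement $A_j+B_I\in\{-1,0,1\}$ for all pairs, and replaces the case analysis by a short interval-intersection argument for the shift parameter $t$. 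This is a real simplification: it makes transparent why the one-dimensional kernel suffices against $p+q$ constraints, whereas in the paper's version that point is somewhat buried in the cases.

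One small inaccuracy worth fixing: the word ``precisely'' in your interval step overstates things. The intersection is nonempty exactly when $\max A-\min A\le 2$, $\max B-\min B\le 2$, $\max A+\max B\le 2$, and $\min A+\min B\ge -2$; your stated bounds $\max A+\max B\le 1$ and $\min A+\min B\ge -1$ are strictly stronger. They do follow from $A_j+B_I\in\{-1,0,1\}$, so the proof stands, but you should either weaken the claim to ``sufficient'' or state the sharp conditions.
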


\begin{proof}
First, as in \cite{Schoenberg64}, we note that if $s$ and $t$ are positive integers and $s=tu+v$, with $u$ and $v$ integers and $0\leq v<t$, then the division algorithm in $\mathbb{Z}[z]$ certainly produces
\[
\sum_{k=0}^{s-1} z^k = \left(\sum_{k=1}^u z^{s-kt}\right)\left(\sum_{k=0}^{t-1} z^k\right) + \sum_{k=0}^{v-1} z^k.
\]
It follows that the Euclidean algorithm in $\mathbb{Z}[z]$ applied to the cyclotomic polynomials $\Phi_p(z)$ and $\Phi_q(z)$ produces integer polynomials $a(z)$ and $b(z)$, with $\deg(a)<q$ and $\deg(b)<p$, such that $a(z)\Phi_p(z) + b(z)\Phi_q(z) = 1$.
Since $\Phi_m(-z)=\Phi_{2m}(z)$ when $m$ is odd, we may rewrite this as $a(-z)\Phi_{2p}(z) + b(-z)\Phi_{2q}(z) = 1$.
Write $F(z) = \Phi_{2pq}(z) h(z)$, so that $\deg(h) < p+q-1$.
We therefore find that
\[
F(z) = h(z)a(-z)\Phi_{2 p q}(z)\Phi_{2 p}(z) + h(z)b(-z)\Phi_{2 p q}(z)\Phi_{2 q}(z).
\]
Let $f_1(z)$ and $c(z)$ be the integer polynomials satisfying $\deg(f_1)<q-1$ and
\[
h(z)a(-z) = c(z)\Phi_{2q}(z) + f_1(z),
\]
so that
\[
F(z) = f_1(z)\Phi_{2 p q}(z)\Phi_{2 p}(z) + (h(z)b(-z) + c(z)\Phi_{2p}(z))\Phi_{2 p q}(z)\Phi_{2 q}(z).
\]
We choose $f_2(z) = h(z)b(-z) + c(z)\Phi_{2p}(z)$ and one may check easily that $\deg(f_2)<p$.
Since $\Phi_{pq}(z)\Phi_p(z) = (z^{pq}-1)/(z^q-1) = \Phi_p(z^q)$, we see that $\Phi_{2pq}(z)\Phi_{2p}(z) = \Phi_p(-z^q)$, and similarly when the roles of $p$ and $q$ are reversed.
We have therefore produced integer polynomials $f_1(z)$ and $f_2(z)$ satisfying the required degree bounds and \eqref{eqnDecomp}, and the first statement of the Lemma follows.

For the second statement, let 
\begin{equation}\label{eqnST}
f_1(z)\Phi_p(-z^q) = \sum_{i=0}^{pq-1} s_i z^i
\textrm{\quad and\quad}
f_2(z)\Phi_q(-z^p) = \sum_{j=0}^{pq-1} t_j z^j,
\end{equation}
so that $s_{i+q}=-s_i$ for $0\leq i<q(p-1)$ and $t_{j+p}=-t_j$ for $0\leq j<(q-1)p$, and let $F(z) = \sum_{i=0}^{pq-1}u_i z^i$.
It is straightforward to verify that if $(f_1(z), f_2(z))$ satisfy \eqref{eqnDecomp}, then so does
$(f_1(z) + m\Phi_{2q}(z), f_2(z) - m\Phi_{2p}(z))$ for any integer $m$.
Since $u_0=1$, we may therefore assume that $s_0=1$ and $t_0=0$.
Select integers $p'$ and $q'$ so that $p p' + q q' = 1$, and select an integer $i$ with $1\leq i<pq$.
Let $e$ be the integer with $0 \leq e < pq$ such that $e \equiv i p p'$ mod $pq$.
Since $e \equiv 0$ mod $p$, $e \equiv i$ mod $q$, and $t_0=0$, we see that $u_e = s_e + t_e = \pm s_i$.
As a Reinhardt polynomial, each coefficient of $F(z)$ is $-1$, $0$, or $1$, so consequently $s_i\in\{-1,0,1\}$ for each $i$.

If there exists an integer $i$ with $1\leq i<pq$ and $s_i = 0$, then a similar argument, using $0 \leq e < pq$ such that $e \equiv i p p' + j q q'$ mod $pq$, shows that $t_j \in \{-1,0,1\}$ for each $j$.
Suppose then that each $s_i=\pm1$.
If $s_i+t_i \neq 0$ for all $i$ then $s_i + t_i = (-1)^i$, and we can 
    use the trivial decomposition $s_i = (-1)^i$ and $t_i = 0$ for each $i$.
Hence assume that there exists a $k$ such that $s_k + t_k = 0$. 
In particular, $t_k = -s_k = \mp 1$ is odd.
Replace $f_1(z)$ by $f_1^*(z)=f_1(z)-\Phi_{2q}(z)$ and $f_2(z)$ by $f_2^*(z) = f_2(z)+\Phi_{2p}(z)$, and write $f_1^*(z)\Phi_p(-z^q) = \sum_{i=0}^{pq-1} s^*_i z^i$ and $f_2^*(z)\Phi_q(-z^p) = \sum_{j=0}^{pq-1} t^*_j z^j$.
Thus, each $s^*_i$ is even, $s^*_0=0$, $t^*_0=1$, and $t^*_k$ is even.
If $e$ satisfies $0\leq e<pq$, $e\equiv0$ mod $p$, and $e\equiv k$ mod $q$, then $u_e=\pm s^*_0\pm t^*_k = \pm t^*_k \in \{-1, 0, 1\}$, and consequently, as $t^*_k$ is even, $t^*_k=0$.
In the same way as above, the fact that $s^*_0=0$ implies that each $t^*_j\in\{-1,0,1\}$, and because $t^*_k=0$, we now conclude that each $s^*_i\in\{-1,0,1\}$ as well.
In fact, because each $s_i^*$ is even, we have that $f_1^*(z)=0$ and so $f_1(z)=\Phi_{2q}(z)$.
\end{proof}

We say that polynomials $f_1(z)$ and $f_2(z)$ satisfying all the conditions of Lemma~\ref{lemDecomp} form a \textit{decomposition} of the Reinhardt polynomial $F(z)$, and we say that a decomposition is \textit{trivial} if either $f_1(z)\in\{0,\Phi_{q}(-z)\}$, or $f_2(z)\in\{0,\Phi_{p}(-z)\}$.
Note that we do not require that $f_1(z)=0$ or $f_2(z)=0$ in a trivial decomposition, since other trivial configurations may occur.
For example, with $p=3$ and $q=5$, one may check that choosing $f_1(z)=z(1-z)$ and $f_2(z)=\Phi_3(-z)$ in \eqref{eqnDecomp} produces the same Reinhardt polynomial as the selection $f_1(z)=1-z^3+z^4$ and $f_2(z)=0$.

From the proof of the lemma, we see that both zero and nonzero coefficients appear among both the $s_i$ and the $t_j$ in any nontrivial decomposition.
We require one further property of a nontrivial decomposition.

\begin{lem}\label{lemParity}
Let $n = p q$ with $p$ and $q$ distinct odd primes, suppose $F(z)$ is a Reinhardt polynomial for $n$, and suppose $f_1(z)$ and $f_2(z)$ form a nontrivial decomposition of $F(z)$, and define the sequence $\{s_i\}$ as in \eqref{eqnST}.
Then there exist integers $i$ and $j$ with $0\leq i < j < pq$ such that 
$i \equiv j$ mod $2$,
$s_i = 1$, $s_j = 0$, and there are an even number of nonzero terms between $s_i$ and $s_j$:
$\sum_{k=i+1}^{j-1} |s_k| \equiv 0$ mod $2$.
\end{lem}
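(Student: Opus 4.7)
My plan is to translate the lemma into a purely combinatorial statement about the underlying index set. Write $f_1(z) = \sum_{a=0}^{q-1} c_a z^a$ and set $A = \{a : c_a \ne 0\}$, $Z = \{0,\dots,q-1\} \setminus A$, $m = |A|$, and $\lambda(x) = |A \cap [0,x)|$. Every index in $\{0,\dots,pq-1\}$ has a unique representation $i = a + bq$ with $0 \le a < q$ and $0 \le b < p$, giving $s_{a+bq} = (-1)^b c_a$. A direct count using the $q$-periodicity of the nonzero pattern of $s$ shows that for $i = a + bq$ and $j = a' + b'q$ with $b \le b'$, the number of nonzero $s_k$ with $i < k < j$ equals $(b' - b)m + \lambda(a') - \lambda(a) - 1$. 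Since $q$ is odd, the parity constraint $i \equiv j \pmod 2$ reduces to $b' - b \equiv a + a' \pmod 2$, so the parity of the count depends only on $(a, a')$ and equals $\phi(a, a') := (a + a')m + \lambda(a) + \lambda(a') + 1 \pmod 2$.

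Setting $g(x) := xm + \lambda(x) \pmod 2$, the desired condition $\phi(a, a') = 0$ becomes $g(a) \ne g(a')$. I would then split into two cases. In the main case, some $a' \in Z$ satisfies $g(a') = 1$; I take $a = 0$ (which lies in $A$ with $c_0 = 1$ by the normalization $s_0 = 1$ from the proof of Lemma~\ref{lemDecomp}), so that $g(0) = 0 \ne 1 = g(a')$, and I choose $b = 0$ together with any $b' \in \{0, \dots, p-1\}$ of the parity of $a'$ to realize the pair. The edge case is $g \equiv 0$ on $Z$; the telescoping identity $g(x+1) - g(x) \equiv m + \mathbf{1}[x \in A] \pmod 2$ rules this out when $m$ is odd, since $|Z| = q - m$ is then positive and even, hence at least $2$, and a short induction gives $g(z_1), g(z_2), \dots = 0, 1, 0, 1, \dots$, contradicting $g(z_2) = 0$. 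When $m$ is even, the same identity forces $1 \in A$ with $g(1) = 1$ and $\min Z \ge 2$; I then take $(a, a') = (1, \min Z)$, which has $\phi = 0$ and the crucial property $a < a'$. A brief check on the parity of $\min Z$ and the sign of $c_1$ produces $(b, b') \in \{0, 1, 2\}^2$ realizing $i < j$, even when $p = 3$.

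The main technical obstacle is the realization of $(b, b')$ when $p = 3$. For $p \ge 5$ there is always enough room in $\{0, \dots, p-1\}$, but when $p = 3$ the only configuration with no valid choice is $c_a = -1$, $a > a'$, and $a + a' \equiv 0 \pmod 2$, which would force $(b, b') = (1, 1)$ and hence $i > j$; the choice $(a, a') = (1, \min Z)$ sidesteps this via $a = 1 < \min Z$. The most delicate part is pinning down the structural consequences of $g \equiv 0$ on $Z$ via the telescoping identity, establishing that $1 \in A$, $g(1) = 1$, and $\min Z \ge 2$, so that the fallback pair $(1, \min Z)$ is always available.
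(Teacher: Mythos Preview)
Your approach is genuinely different from the paper's. The paper argues directly on the sequence: if a block $s_\ell\,0\,0$ with $s_\ell\ne 0$ ever appears, take $(i,j)=(\ell,\ell+2)$; otherwise every zero is isolated, so one picks $k\in[0,q)$ with $s_k=0$, looks at the window $0\,s_{k+1}\cdots s_{k+q-1}\,0$, and uses the fact that $k$ and $k+q$ have opposite parity (since $q$ is odd) to force some maximal run of nonzero terms in this window to have even length. The first index of that run and the zero following it give the pair, and a shift by $\pm q$ corrects the sign if $s_i=-1$. Your route via the parity function $g(x)=xm+\lambda(x)$ is more algebraic and makes the obstruction transparent: $g$ cannot be constant on $\{0,\dots,q-1\}$, so a pair $(a,a')\in A\times Z$ with $g(a)\ne g(a')$ always exists.

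There is, however, a genuine gap. You invoke the normalization $s_0=1$ from the \emph{proof} of Lemma~\ref{lemDecomp}, but Lemma~\ref{lemParity} is stated for an arbitrary nontrivial decomposition, and the definition of ``decomposition'' does not carry this normalization. You cannot assume it without loss of generality: passing from $(f_1,f_2)$ to $(f_1+m\Phi_{2q},\,f_2-m\Phi_{2p})$ replaces each $s_i$ by $s_i+m(-1)^i$, so a conclusion about the normalized sequence says nothing about the original one. Worse, in Proposition~\ref{prop:pqRep} the lemma is applied to both the $s$- and $t$-sequences of a \emph{single} decomposition, and since $s_0+t_0=1$ with both in $\{-1,0,1\}$, at most one of them equals~$1$; your argument would fail for the other. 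Concretely, both of your cases use $0\in A$: in Case~1 you need $g(0)=0$ with $0\in A$ to pair against $a'\in Z$ with $g(a')=1$, and in Case~2 you compute $g(1)-g(0)=m+\mathbf 1[0\in A]=m+1$ to force $g(1)=1$ and $1\in A$. If $c_0=0$, the first choice is unavailable and the second computation gives $g(1)=m$, which is $0$ when $m$ is even, breaking the fallback pair $(1,\min Z)$. The strategy can be repaired---for instance by basing the argument at $\min A$ rather than at $0$ and redoing the realization analysis for $p=3$ without knowing the sign of $c_{\min A}$---but as written the proof does not establish the lemma as stated.
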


\begin{proof}
It suffices to establish the result for $s_i = \pm1$, since if $s_i = -1$, we may use $i\pm q$ and $j\pm q$ in place of $i$ and $j$.
We first note that if a coefficient sequence of the form $s_\ell 0 0$ ever occurs with $s_\ell \neq 0$, then we may select $i = \ell$ and $j = \ell + 2$.

Choose $k$ with $0 \leq k < q$ such that $s_k = 0$.
Such an integer $k$ must exist since $f_1(z)$ is nontrivial.
Then $s_{k+q}=0$ as well.
Consider the sequence $0 s_{k+1} \cdots s_{k+q-1} 0$.
Using the observation above, we may assume that each $0$ in this sequence is isolated.
Since $k$ and $k+q$ have opposite parity, we conclude that there exists a string of consecutive nonzero terms of even length within this sequence, beginning at $s_i$ for some $i$, and ending at $s_{i+2m-1}$ for some positive integer $m$, so that $s_{i+2m}=0$.
Selecting $j=i+2m$ completes the proof.
\end{proof}

We may now prove the proposition.

\begin{proof}[Proof of Proposition~\ref{prop:pqRep}]
Assume that there exists a nontrivial decomposition of $F(z)$,
\[
F(z) = f_1(z)\Phi_p(-z^q) + f_2(z)\Phi_q(-z^p),
\]
and define the sequences $\{s_i\}$ and $\{t_j\}$ as in \eqref{eqnST}.
Using Lemma~\ref{lemParity}, select integers $i$, $j$, $k$, and $\ell$, each in $[0,pq)$, so that 
$i \equiv j$ mod $2$, $k \equiv \ell$ mod $2$, $s_i = t_k = 1$, $s_j = t_\ell = 0$, $i < j$, $k < \ell$, and there are an even number of nonzero terms between $s_i$ and $s_j$, as well as between $t_k$ and $t_\ell$.
Select integers $p'$ and $q'$ so that $p p' + q q' = 1$, and assume without loss of generality that $p'$ is even, so $q q' \equiv 1$ mod $2p$ and $pp' = 1 + q$ mod $2q$.
Let
\[
\hat{F}(z)=(1-z^{pq} + z^{2pq} - z^{3pq})F(z) = 
    \sum_{i=0}^{4pq-1} u_i z^i,
\]
so that the nonzero coefficients of $\hat{F}(z)$ alternate $\pm1$, and let
\[
(1-z^{pq}+z^{2pq}-z^{3pq})f_1(z)\Phi_p(-z^q) = \sum_{i=0}^{4pq-1} s_i z^i
\]
and
\[
(1-z^{pq}+z^{2pq}-z^{3pq})f_2(z)\Phi_q(-z^p) = \sum_{j=0}^{4pq-1} t_j z^j.
\]
Select integers $e_{i,k}$ in $[0, 2pq)$, and $e_{i, \ell}$ and $e_{j,k}$ each in $[2pq,4pq)$, so that
\begin{align*}
e_{i,k} &\equiv i p p' + k q q'  \mod{2pq},\\
e_{i, \ell} &\equiv i p p' + \ell q q'  \mod{2pq},\\
e_{j,k} &\equiv j p p' + k q q' \mod{2pq}.
\end{align*}
We then obtain that
\begin{align*}
e_{i,k} &\equiv i + q(i+k) \mod{2q}, &
e_{i,k} &\equiv k \mod{2p},\\
e_{i,\ell} &\equiv i + q(i+\ell) \mod{2q}, &
e_{i,\ell} &\equiv \ell \mod{2p},\\
e_{j,k} &\equiv j + q(j+k) \mod{2q}, &
e_{j,k} &\equiv k \mod{2p},
\end{align*}
and so
\begin{align*}
s_{e_{i,k}} &= \pm s_i = \pm1, &
t_{e_{i,k}} &= t_k = 1,\\
s_{e_{i,\ell}} &= \pm s_i = \pm1, & 
t_{e_{i,\ell}} &= t_\ell = 0,\\
s_{e_{j,k}} &= \pm s_j = 0, & 
t_{e_{j,k}} &= t_k = 1.
\end{align*}
However, since $s_{e_{i,k}}+t_{e_{i,k}} = u_{e_{i,k}} \in\{-1,0,1\}$ and $t_{e_{i,k}}=1$, we must have $s_{e_{i,k}}=-1$.
If $i$ and $k$ had the same parity, then $e_{i,k}\equiv i$ mod $2q$, and this would imply that $s_{e_{i,k}}=s_i=1$, so in fact $i\not\equiv k$ mod $2$, and therefore $i\not\equiv \ell$ mod $2$ and $j\not\equiv k$ mod $2$ as well.
Thus, $e_{i,k}\equiv e_{i,\ell}\equiv i+q$ mod $2q$ and so $s_{e_{i,\ell}}=-1$.
Thus,
\begin{equation}\label{eqnUvals}
\begin{split}
u_{e_{i,k}} &= s_{e_{i,k}} + t_{e_{i,k}} = 0,\\
u_{e_{i, \ell}} &= s_{e_{i,\ell}} + t_{e_{i,\ell}} = -1,\\
u_{e_{j,k}} &= s_{e_{j,k}} + t_{e_{j,k}} = 1.
\end{split}
\end{equation}
Consider the number of nonzero values of $s_m$ and $t_m$ with $m$ lying strictly between $e_{i,k}$ and $e_{j,k}$.
Notice that $e_{i,k} < e_{j,k}$ and $e_{i,k} < e_{i,\ell}$ by construction.
Since $e_{i,k} \equiv e_{j,k} \equiv k$ mod $2p$, using the fact that $t_j=t_{j+2p}$ we find that there are an odd number of such terms $t_m$, and because $e_{i,k} \equiv i+q$ mod $2q$ and $e_{j,k} \equiv j+q$ mod $2q$, there are an even number of such terms $s_m$, due to the manner of choosing $i$ and $j$.
This implies that there are an odd number of nonzero terms $u_m$ with index lying strictly between $e_{i,k}$ and $e_{j,k}$.
A similar argument can be made using $e_{i,k}$ and $e_{i, \ell}$, so there are an odd number of nonzero terms $u_m$ with index lying strictly between these two values.
It follows then that there are an odd number of nonzero coefficients of $\hat{F}(z)$ lying strictly between $e_{i,\ell}$ and $e_{j,k}$.
However, from \eqref{eqnUvals} these terms have opposite sign, yet the nonzero coefficients of $\hat{F}(z)$ must alternate in sign, so we obtain a contradiction.
Therefore, no nontrivial decomposition exists.
\end{proof}

\section{Proofs of Theorems~\ref{thmSporadic} and~\ref{thmNumber}}\label{secBuildingPolygons}

\subsection{Proof of Theorem~\ref{thmSporadic}}\label{subsecSporadic}
Let $n=pqr$, with $p$ and $q$ distinct odd primes and $r\geq2$.
Using \eqref{eqnCycGen}, it suffices to construct nontrivial polynomials $f_1(z)$ and $f_2(z)$ so that the polynomial
\[
F(z) = f_1(z)\Phi_q(-z^{pr}) + f_2(z)\Phi_p(-z^{qr}).
\]
has the required structure.
(We set $f_0(z)=0$, as well as all of the $f_i(z)$ that correspond to other prime factors of $r$.)
We select $f_1(z)=1-z$, and let $g_1(z)=f_1(z)\Phi_q(-z^{pr})$ for convenience.
Arrange the coefficients of $g_1(z)$ as $q$ rows of length $pr$:
\begin{equation}\label{eqnLayoutG1}
\begin{matrix}
+&-&0&0&\cdots&0\\
-&+&0&0&\cdots&0\\
\vdots&\vdots&\vdots&\vdots&\ddots&\vdots\\
+&-&0&0&\cdots&0,
\end{matrix}
\end{equation}
where we write $+$ for $+1$ and $-$ for $-1$.
Next, we select $f_2(z)$ to have a coefficient sequence of the form
\[
\begin{matrix}
0&A_1&B_1&A_2&B_2&\cdots&A_t&B_t&C,
\end{matrix}
\]
where each $A_i$ and $B_i$ is a sequence of length $r$ over $\{-1,0,1\}$, $C$ is a sequence of length $r-1$ over $\{-1,0,1\}$, and $t=(q-1)/2$, so there are $qr$ coefficients in all.
In addition, we require that each $A_i$, $B_i$, and $C$ must have an odd number of nonzero terms which alternate in sign, and further the first (and last) nonzero term of each $A_i$ and $C$ must be $+1$, while the first (and last) nonzero term of each $B_i$ must be $-1$.

It is straightforward to count the number of different such polynomials $f_2(z)$.
Each possible $A_i$ or $B_i$ corresponds to a subset of $\{1,\ldots,r\}$ with odd length (the subset corresponds to the positions of the nonzero coefficients), and in the same way $C$ corresponds to a subset of $\{1,\ldots,r-1\}$ with odd length.
Since $\sum_{k\geq0}\binom{m}{2k+1}=2^{m-1}$ for any positive integer $m$, it follows that the number of different possible polynomials $f_2(z)$ is $2^{(r-1)(q-1)+(r-2)}=2^{q(r-1)-1}$.

For convenience, we let $g_2(z)=f_2(z)\Phi_p(-z^{qr})$, so that $g_2(z)$ has the coefficient sequence
\begin{equation}\label{eqnLayoutG2}
\begin{matrix}
0&A_1&B_1&A_2&B_2&\cdots&A_t&B_t&C\\
0&\negative{A_1}&\negative{B_1}&\negative{A_2}&\negative{B_2}&\cdots&\negative{A_t}&\negative{B_t}&\negative{C}\\
0&A_1&B_1&A_2&B_2&\cdots&A_t&B_t&C\\
\vdots&\vdots&\vdots&\vdots&\vdots&\ddots&\vdots&\vdots&\vdots\\
0&A_1&B_1&A_2&B_2&\cdots&A_t&B_t&C,
\end{matrix}
\end{equation}
where  there are $p$ rows of length $qr$, and here $\negative{X}$ denotes the sequence created by negating each term of the sequence $X$.

We claim first that every polynomial $F(z)=g_1(z)+g_2(z)$ produced in this way is a Reinhardt polynomial for $n$.
Since it is clear that $F(0)=1$, $\deg(F)<n$, the last nonzero coefficient of $F(z)$ is $+1$, and $\Phi_{2n}(z)\mid F(z)$, we need only verify that the nonzero coefficients of $F(z)$ alternate $\pm1$.
Certainly the polynomial $g_2(z)$ already has this property, so we need only verify that adding $g_1(z)$ to it maintains this pattern.
For convenience, let $\alpha_k$ denote the $k$th block of length $r$ from $g_1(z)$, so that $\alpha_{2kp}=(+1,-1,0,\ldots,0)$ for $0\leq k\leq(q-1)/2$, $\alpha_{(2k+1)p}=(-1,+1,0,\ldots,0)$ for $0\leq k<(q-1)/2$, and all other $\alpha_k$ are entirely $0$.
We consider the effect of adding each block $\alpha_{kp}$ to the coefficient sequence for $g_2(z)$.

Consider first the block $\alpha_0$.
If $A_1$ begins with $+1$, then this coefficient cancels with the $-1$ in the second position of $\alpha_0$, and so the coefficient sequence for $F(z)$ begins with $+1$, then the remaining coefficients of $A_1$ follow, and any nonzero values here begin with $-1$ and alternate in sign.
On the other hand, if $A_1$ begins with $0$, then adding $\alpha_0$ simply adds an additional $(+1,-1)$ pair at the beginning of the sequence.

Next, consider the block $\alpha_{2kp}$ with $1\leq k\leq(q-1)/2$.
The $(+1,-1)$ pair of this block from \eqref{eqnLayoutG1} overlays the coefficient sequence of $g_2(z)$ either in the last position of some $B_i$ and the first position of either $A_{i+1}$ or $C$, or in the last position of some $\negative{A_i}$ and the first position of $\negative{B_i}$.
Suppose the overlay occurs at the last position of $B_i$ and the first position of $A_{i+1}$.
If $B_i$ ends with $-1$ and $A_{i+1}$ begins with $+1$, then this pair is canceled by $\alpha_{2kp}$.
If $B_i$ ends with $-1$ and $A_{i+1}$ begins with $0$, then the addition of $\alpha_{2kp}$ in effect simply moves the $-1$ by one position to the right.
If $B_i$ ends with $0$ and $A_{i+1}$ begins with $+1$, then $\alpha_{2kp}$ moves the $+1$ by one position to the left.
Finally, if $B_i$ ends with $0$ and $A_{i+1}$ begins with $0$, then adding $\alpha_{2kp}$ inserts an additional $(+1,-1)$ pair between a $-1$ and a $+1$, so the alternating sign pattern is maintained.
The argument for the other cases is similar.

Finally, consider the block $\alpha_{(2k+1)p}$ with $0\leq k<(q-1)/2$.
Now the $(-1,+1)$ pair of this block occurs either at the last position of $A_i$ and the first position of $B_i$, or at the last position of $\negative{B_i}$ and the first position of either $\negative{A_{i+1}}$ or $\negative{C}$.
The proof here is similar to the one for the even-indexed blocks.
This completes the proof that $F(z)$ is a Reinhardt polynomial for $n$.

Next, we claim that only $2^{r-2}$ of the polynomials that may be constructed with this method produce periodic Reinhardt polygons, so that the vast majority are in fact sporadic.

Let $u_k$ denote the coefficient of $z^k$ in $F(z)$, for $0\leq k<n$.
The polynomial $F(z)$ corresponds to a periodic Reinhardt polygon if there exists a positive integer $d\mid n$ such that $u_k = -u_{k+d}$ for $0\leq k<n-d$.
In this case, we say that $F(z)$ is \textit{$d$-periodic}.
Let $m=n/d$.
Since the number of nonzero coefficients of $F(z)$ is odd, it follows that the number of nonzero coefficients $u_k$ with $0\leq k<d$ is odd, and so $m$ is odd.
By replacing $d$ with an odd multiple of it if necessary, we may assume that $m$ is prime.
We consider three cases to complete the proof.

First, suppose that $m=p$, so $d=qr$.
Since $F(z)$ and $g_2(z)$ are both $qr$-periodic, it follows that $g_1(z)$ must be $qr$-periodic as well.
Let $\gamma_k$ denote the $k$th block of size $r$ of the coefficients of $g_1(z)$.
By construction, the only nonzero such blocks are $\gamma_{kp}$, for $0\leq k<q$, but by hypothesis $\gamma_0=\negative{\gamma_q}$.
Clearly, $q$ is not a multiple of $p$, so this is a contradiction.

Second, suppose that $m=q$, so $d=pr$.
Since $F(z)$ and $g_1(z)$ are both $pr$-periodic, then so is $F(z)-g_1(z)=g_2(z)$, and thus so is $g_2(z)/z$.
Let $A_{t+1}$ denote the sequence of length $r$ obtained by appending $0$ to $C$, so that $g_2(z)/z$ has the coefficient sequence
\begin{equation}\label{eqnShiftedG2}
\begin{matrix}
A_1&B_1&A_2&B_2&\cdots&A_t&B_t&A_{t+1}\\
\negative{A_1}&\negative{B_1}&\negative{A_2}&\negative{B_2}&\cdots&\negative{A_t}&\negative{B_t}&\negative{A_{t+1}}\\
\vdots&\vdots&\vdots&\vdots&\ddots&\vdots&\vdots&\vdots\\
A_1&B_1&A_2&B_2&\cdots&A_t&B_t&A_{t+1},
\end{matrix}
\end{equation}
organized as $p$ rows of size $qr$.
Let $\beta_k$ denote the $k$th block of size $r$ in this sequence, so $\beta_0=A_1$, $\beta_1=B_1$, etc.
By hypothesis, we have that $\beta_0=\negative{\beta_p}=\beta_{2p}=\negative{\beta_{3p}}=\cdots=\beta_{(q-1)p}$, and we note that each index in this list is unique modulo $q$.
It follows that $A_1$ precisely matches exactly one block in each column of \eqref{eqnShiftedG2}, and by observing the parity of the indices we conclude that $A_1=\cdots=A_{t+1}=\negative{B_1}=\cdots=\negative{B_t}$.
Since $A_{t+1}$ ends with $0$, it follows that if $m=q$ then $f_2(z)$ has coefficients with the form
\[
\begin{matrix}
0&C&0&\negative{C}&0&C&0&\cdots&0&C,
\end{matrix}
\]
where $C$ has length $r-1$ and nonzero coefficients alternating $+1$ and $-1$, with $+1$ for its first and last nonzero coefficient.
Thus, there are exactly $2^{r-2}$ polynomials with $m=q$.

Last, suppose that $m=\ell$, where $\ell$ is an odd prime dividing $r$, with $\ell\not\in\{p,q\}$.
Let $r=\ell s$, so that $d=pqs$.
Group the coefficients of $F(z)$ into blocks of size $qs$, and denote these blocks by $\delta_k$ with $0\leq k<p\ell$.
Arrange these blocks as $p$ rows of size $\ell$:
\begin{equation}\label{eqnDeltas}
\begin{matrix}
\delta_0 & \delta_1 & \cdots & \delta_{\ell-1}\\
\delta_\ell & \delta_{\ell+1} & \cdots & \delta_{2\ell-1}\\
\vdots&\vdots&\ddots&\vdots\\
\delta_{(p-1)\ell} & \delta_{(p-1)\ell+1} & \cdots & \delta_{p\ell-1},
\end{matrix}
\end{equation}
so that the coefficients are arranged into $p$ rows of size $qr$, just as in \eqref{eqnLayoutG2}.
Thus, the first integer in each row of \eqref{eqnDeltas} corresponds to the coefficient positions $kqr$, with $0\leq k<p$.
When $g_1(z)$ is added to $g_2(z)$, from \eqref{eqnLayoutG1} we see that the only coefficients affected occur in pairs beginning at positions which are multiples of $pr$.
It follows that $\delta_0$ begins with $+1$, but $\delta_{k\ell}$ begins with $0$ for $1\leq k<p$.
Using the hypothesis of periodicity with $m=\ell$, we then find that the first integer in $\delta_{2kp}$ is $+1$ for $0\leq k\leq(\ell-1)/2$, and the first value in $\delta_{(2k+1)p}$ is $-1$ for $0\leq k\leq(\ell-3)/2$.
In particular, $\delta_p$ must begin with $-1$, and $\delta_{p+\ell}$ must begin with $0$ (since $\delta_\ell$ begins with $0$), and so either $\delta_p$ or $\delta_{p+\ell}$ must have been altered by one of the nonzero blocks of $g_1(z)$.
Suppose $\delta_p$ was altered.
Then the first position of $\delta_p$ is either $(2k+1)pr$ or $2kpr+1$ for some $k$.
The first case implies that $q=(2k+1)\ell$, and the second produces $ps(q-2k\ell)=1$, and both of these are clearly impossible since $p$ and $q$ are prime.
Suppose then that $\delta_{p+\ell}$ was altered.
Since $0$ occurs in the first position here, we have that $(p+\ell)qs$ must be either $kpr$ or $kpr+1$ for some $k$.
The first case implies that $p\mid\ell q$, which is impossible, and the second case yields $s(pq+\ell q -k\ell p) = 1$, so $s=1$ and thus $\ell=r$.
In particular, we have that $pq\equiv1$ mod $r$ in this case.

We can eliminate this last possibility by considering $\delta_{2p}$ and $\delta_{2p+r}$.
Since $\delta_{2p}$ begins with $+1$ and $\delta_{2p+r}$ starts with $0$, one of these blocks must have been altered by a nonzero block of $g_1(z)$.
If $\delta_{2p}$ was altered, then since $s=1$ we have that $2pq$ must equal either $2kpr$ or $(2k+1)pr+1$ for some $k$, and it is straightforward to show that neither of these is possible.
If $\delta_{2p+r}$ was changed, then $(2p+r)q$ must equal either $kpr$ or $kpr+1$ for some $k$.
The former possibility is easily dismissed; the latter produces $2pq + qr - kpr = 1$, and so $2pq\equiv1$ mod $r$.
However, from the analysis of $\delta_p$, we know that $pq\equiv1$ mod $r$, and so we conclude $r=1$, a contradiction.
Thus, $F(z)$ cannot be periodic with $m=\ell$.
This completes the proof of Theorem~\ref{thmSporadic}.
\qed

\subsection{Proof of Theorem~\ref{thmNumber}}\label{subsecNumber}
Suppose $p$ and $q$ are fixed odd primes with $p<q$, let $\epsilon>0$, and suppose that $r$ is a positive integer having no odd prime divisor less than $p$.
We first generalize the construction of the proof of Theorem~\ref{thmSporadic}, by allowing freedom in the construction of $f_1(z)$.
Select a nontrivial, proper subset $S$ of $\{0,1,\ldots,p-1\}$, and let
\[
f_1(z) = \sum_{s\in S} (-1)^s z^{rs} (1-z),
\]
so that the case $S=\{0\}$ corresponds to the polynomial $f_1(z)=1-z$ employed in the prior proof.
Let $g_1(z)=f_1(z)\Phi_q(-z^{pr})$ as before, and construct $f_2(x)$ and $g_2(z)$ as in the previous proof, by selecting qualifying sequences $A_i$, $B_i$, and $C$, for $1\leq i\leq t=(q-1)/2$.
Let $F(z) = g_1(z) + g_2(z)$.

Since the nonzero coefficients of $g_1(z)$ overlap $g_2(z)$ at the boundaries of the blocks $A_i$, $B_i$, and $C$, one may verify in the same way as the prior proof that the coefficients of $F(z)$ are all $-1$, $0$, and $1$, with the nonzero coefficients alternating in sign.
Also, it is easy to see that different choices for $g_1(z)$ and $g_2(z)$ can never produce the same polynomial $F(z)$: if $g_1(z)+g_2(z) = g_1^*(z) + g_2^*(z)$, then $g_1(z)-g_1^*(z)=g_2^*(z)-g_2(z)$, and the left side is $pr$-periodic and the right side is $qr$-periodic, so both sides must be $r$-periodic, and $g_2(0)-g_2^*(0)=0$ then implies that $g_1(z)-g_1^*(z)=0$.
It follows that the total number of different polynomials that can be produced by using this construction is $(2^p-2)2^{q(r-1)-1}$.

We may also determine the number of polynomials $F(z)$ arising from this construction that exhibit a periodic structure.
Suppose that $F(z)$ is $d$-periodic, and that $m=n/d$ is prime.
If $m=p$, then as before $g_1(z)$ is $qr$-periodic, and it follows that $S=\{\}$ or $S=\{0,\ldots,p-1\}$, but these choices were disallowed.
If $m=q$, then as in the prior proof we find that $A_1=\cdots=A_t=C0=\negative{B_1}=\cdots=\negative{B_t}$, so that there are $(2^p-2)2^{r-2}$ such polynomials.
Finally, an argument similar to that employed in the previous proof shows that no polynomials $F(z)$ have $m=\ell$ with $\ell$ an odd prime divisor of $r$ and $\ell\not\in\{p,q\}$.

Not all polynomials $F(z)$ constructed by using this method are Reinhardt polynomials, since we do not guarantee that $F(0)=1$.
(For this, one must require that $0\in S$.)
However, each such $F(z)$ is equivalent to a Reinhardt polynomial under a dihedral action, and we may therefore determine a lower bound on the number of different sporadic Reinhardt polygons with $n=pqr$ sides.
By accounting for equivalence classes, we conclude that this method constructs at least
$(2^p-2)\left(\frac{2^{q(r-1)-1}}{2pqr} - 2^{r-2}\right)$ different sporadic Reinhardt polygons.

Since $p$ is the smallest odd prime divisor of $n$, from \cite[Cor.~2]{Mossinghoff11} we have that
\[
E_0(pqr) \sim \frac{2^{qr}}{4qr}
\]
as $r$ grows large.
Therefore,
\[
\frac{E_0(pqr)}{E_1(pqr)} \leq \frac{\frac{1}{4qr}\cdot 2^{qr}(1 + o(1))}{\left(2^p-2\right)\left(\frac{2^{q(r-1)-1}}{2pqr} - 2^{r-2}\right)} \leq \frac{p2^q}{2^p-2} + o(1)
\]
for qualifying integers $r\to\infty$, and so
\[
\frac{E_1(pqr)}{E(pqr)} = \frac{1}{\frac{E_0(pqr)}{E_1(pqr)}+1} > \frac{2^p - 2}{p2^q + 2^p - 2} - \epsilon
\]
for sufficiently large such $r$.
\qed

Of course, in practice we may build additional Reinhardt polygons for $n$ by selecting $p$ and $q$ in other ways in our construction.
However, it is possible that some of the same polygons (up to dihedral equivalence) will be constructed for different choices of $p$ and $q$, so in general an inclusion/exclusion argument would need to be employed to improve the lower bound of Theorem~\ref{thmNumber} by using this construction.

\section{Constructing sporadic Reinhardt polygons}\label{secConstructions}

In \cite{Mossinghoff11}, the exact value of $E_1(n)$ was computed for $24$ different values of $n$ where $E_1(n)>0$.
In addition, a partial count for $n=105$ was reported, and some evidence was presented that $n=105$ may be the smallest positive integer where $E_1(n)>E_0(n)$.
It is natural then to determine the number of different Reinhardt polygons that can be constructed by using the method of Section~\ref{secBuildingPolygons} for these $25$ values of $n$.
We report here on some computations made to investigate this.

\begin{table}[t]
\caption{Number of sporadic Reinhardt polygons constructed.}\label{tableExactCounts}
\begin{tabular}{rccrr}
\multicolumn{1}{c}{$n$} & Factorization & $r$ & $E_1(n)$ & $\constr{n}$\\\hline
$30$ & $2\cdot3\cdot5$ & $2$ & $3$ & $3$\\
$42$ & $2\cdot3\cdot7$ & $2$ & $9$ & $9$\\
$45$ & $3^2\cdot5$ & $3$ & $144$ & $144$\\
$60$ & $2^2\cdot3\cdot5$ & $4$ & $4\,392$ & $3\,492$\\
$63$ & $3^2\cdot7$ & $3$ & $1\,308$ & $1\,308$\\
$66$ & $2\cdot3\cdot11$ & $2$ & $93$ & $93$\\
$70$ & $2\cdot5\cdot7$ & $2$ & $27$ & $27$\\
$75$ & $3\cdot5^2$ & $5$ & $153\,660$ & $107\,400$\\
$78$ & $2\cdot3\cdot13$ & $2$ & $315$ & $315$\\
$84$ & $2^2\cdot3\cdot7$ & $4$ & $161\,028$ & $150\,444$\\
$90$ & $2\cdot3^2\cdot5$ & $6$ & $5\,385\,768$ & $3\,371\,568$\\
$99$ & $3^2\cdot11$ & $3$ & $192\,324$ & $192\,324$\\
$102$ & $2\cdot3\cdot17$ & $2$ & $3\,855$ & $3\,855$\\
$110$ & $2\cdot5\cdot11$ & $2$ & $279$ & $279$\\
$114$ & $2\cdot3\cdot19$ & $2$ & $13\,797$ & $13\,797$\\
$117$ & $3^2\cdot13$ & $3$ & $2\,587\,284$ & $2\,587\,284$\\
$130$ & $2\cdot5\cdot13$ & $2$ & $945$ & $945$\\
$140$ & $2^2\cdot5\cdot7$ & $4$ & $633\,528$ & $478\,548$\\
$154$ & $2\cdot7\cdot11$ & $2$ & $837$ & $837$\\
$170$ & $2\cdot5\cdot17$ & $2$ & $11\,565$ & $11\,565$\\
$182$ & $2\cdot7\cdot13$ & $2$ & $2\,835$ & $2\,835$\\
$190$ & $2\cdot5\cdot19$ & $2$ & $41\,391$ & $41\,391$\\
$238$ & $2\cdot7\cdot17$ & $2$ & $34\,695$ & $34\,695$\\
$286$ & $2\cdot11\cdot13$ & $2$ & $29\,295$ & $29\,295$
\end{tabular}
\end{table}

Table~\ref{tableExactCounts} displays the $24$ integers of the form $n=pqr$, with $p$ and $q$ distinct odd primes and $r\geq2$ for which $n-\varphi(2n)\leq46$.
(The bound of $46$ was selected due to computational constraints.)
The fourth column of this table shows the exact value of $E_1(n)$ from \cite{Mossinghoff11}, and the last column exhibits $\constr{n}$, the number of different sporadic Reinhardt polygons that can be constructed by using the method of the proof of Theorem~\ref{thmNumber}, by selecting values for $p$, $q$, and $r$, and then checking all possible nontrivial proper subsets $S$, all permissible sequences $A_i$ and $B_i$ for $1\leq i\leq(q-1)/2$, and all allowable sequences $C$.
In each case, the value of $r$ is forced, and $p$ and $q$ may be selected in two different ways; both ways were checked in computing $\constr{n}$.
As in \cite{Mossinghoff11}, we count two polygons to be distinct only if one cannot be obtained from the other by some combination of rotations and flips.

We see that our construction produces all of the sporadic Reinhardt polygons for the nineteen values of $n$ in the table where $r=2$ or $r=3$.
For example, when $n=30$, select $p=5$, $q=3$, $r=2$, $A_1=0+$, $B_1=0-$, and $C=+$.
If $A_2=+0$ and $B_2=0-$, then we obtain the polygon of Figure~\ref{figSporadic30}(a); if $A_2=0+$ and $B_2=0-$, then we construct the polygon of Figure~\ref{figSporadic30}(b); finally, if $A_2=0+$ (or $+0$) and $B_2=-0$, then we create the polygon of Figure~\ref{figSporadic30}(c).
(Here, we have normalized each dihedral composition of $30$ so that the largest part occurs first.)
This accounts for all three sporadic Reinhardt triacontagons.
For the values of $n$ where $r>3$ ($r=4$ for $n\in\{60,84,140\}$, $r=5$ for $n=75$, and $r=6$ for $n=90$), our method constructs a substantial proportion, but not all, of the sporadic Reinhardt polygons with $n$ sides.

For $n=105$, in \cite{Mossinghoff11} it was found that the number of periodic Reinhardt polygons is $E_0(105)=245\,518\,324$, and some evidence was presented that $n=105$ may be the smallest integer where $E_1(n)>E_0(n)$.
Table~\ref{tableSporadic105} displays $E_1(105,m)$, the total number of sporadic Reinhardt $105$-gons whose corresponding dihedral composition has largest part $m$, for $m=2$ and $m\geq12$, as computed in \cite{Mossinghoff11}.
This table also exhibits the value of $\constr{105,m}$, the number of sporadic Reinhardt $105$-gons with largest part $m$ that may be constructed by using the method of the proof of Theorem~\ref{thmNumber}.
For this calculation, we considered only sets $S$ that contained $0$ when constructing $f_1(z)$, since these in fact sufficed for the results of Table~\ref{tableExactCounts}.
All six different possible choices of $p$, $q$, and $r$ were considered.
In all, our method constructs $126\,714\,582$ different sporadic Reinhardt $105$-gons, including $3\,492\,473$ of the $12\,978\,294$ polygons with $m=2$ or $m\geq12$, or about $27\%$ of this portion.
By using the values we computed for $\constr{105,m}$ for $3\leq m\leq11$, we might expect then that $E_1(105)$ is close to $470$ million, or nearly twice the value of $E_0(105)$.
This then provides some additional empirical evidence that sporadic Reinhardt polygons first outnumber the periodic ones at $n=105$.

\begin{table}[t]
\caption{Sporadic Reinhardt $105$-gons with largest part $m$.}\label{tableSporadic105}
\begin{tabular}{crr@{\quad}|@{\quad}crr}\\
$m$ & $E_1(105,m)$ & $\constr{105,m}$ & $m$ & $E_1(105,m)$ & $\constr{105,m}$\\\hline
$2$ & $1\,831$ & $378$ & $15$ & $1\,227\,719$ & $260\,920$\\
$3$ & ? & $869\,572$ & $16$ & $544\,966$ & $132\,839$\\
$4$ & ? & $12\,319\,890$ & $17$ & $250\,440$ & $66\,113$\\
$5$ & ? & $27\,537\,337$ & $18$ & $117\,075$ & $32\,391$\\
$6$ & ? & $32\,613\,532$ & $19$ & $55\,382$ & $16\,362$\\
$7$ & ? & $19\,788\,045$ & $20$ & $20\,234$ & $6\,145$\\
$8$ & ? & $13\,529\,809$ & $21$ & $16\,580$ & $4\,612$\\
$9$ & ? & $8\,758\,704$ &$22$ & $5\,609$ & $2\,044$\\
$10$ & ? & $4\,936\,396$ & $23$ & $2\,144$ & $903$\\
$11$ & ? & $2\,868\,824$ & $24$ & $788$ & $384$\\
$12$ & $5\,749\,059$ & $1\,601\,785$ & $25$ & $242$ & $164$\\
$13$ & $3\,155\,368$ & $941\,576$ & $26$ & $80$ & $64$\\
$14$ & $1\,830\,741$ & $425\,757$ & $27$ & $36$ & $36$
\end{tabular}
\end{table}

%\bibliographystyle{amsplain}
%\bibliography{paper}

\begin{thebibliography}{10}

\bibitem{AudetHansonMessine09}
C.~Audet, P.~Hansen, and F.~Messine, \emph{Isoperimetric polygons of maximum
  width}, Discrete Comput. Geom. \textbf{41} (2009), no.~1, 45--60. \MR{2470069
  (2009m:51031)}

\bibitem{BezdekFodor00}
A.~Bezdek and F.~Fodor, \emph{On convex polygons of maximal width}, Arch. Math.
  (Basel) \textbf{74} (2000), no.~1, 75--80. \MR{1728365 (2000m:52025)}

\bibitem{Datta97}
B.~Datta, \emph{A discrete isoperimetric problem}, Geom. Dedicata \textbf{64}
  (1997), no.~1, 55--68. \MR{1432534 (97m:52033)}

\bibitem{deBruijn53}
N.~G. de~Bruijn, \emph{On the factorization of cyclic groups}, Nederl. Akad.
  Wetensch. Proc. Ser. A. \textbf{15}, 370--377.

\bibitem{Gashkov07}
S.~Gashkov, \emph{Inequalities for convex polygons and {R}einhardt polygons},
  Mat. Prosveshchenye (3) \textbf{11} (2007), 91--103, (Russian).

\bibitem{LarmanTamvakis84}
D.~G. Larman and N.~K. Tamvakis, \emph{The decomposition of the {$n$}-sphere
  and the boundaries of plane convex domains}, Convexity and graph theory
  ({J}erusalem, 1981), North-Holland Math. Stud., vol.~87, North-Holland,
  Amsterdam, 1984, pp.~209--214. \MR{791034 (87b:52032)}

\bibitem{Mossinghoff06AMM}
M.~J. Mossinghoff, \emph{A \$1 problem}, Amer. Math. Monthly \textbf{113}
  (2006), no.~5, 385--402. \MR{2225472 (2006m:51021)}

\bibitem{Mossinghoff06DCG}
\bysame, \emph{Isodiametric problems for polygons}, Discrete Comput. Geom.
  \textbf{36} (2006), no.~2, 363--379. \MR{2252109 (2007i:52014)}

\bibitem{Mossinghoff11}
\bysame, \emph{Enumerating isodiametric and isoperimetric polygons}, J. Combin.
  Theory Ser. A \textbf{118} (2011), no.~6, 1801--1815. \MR{2793611}

\bibitem{Redei54}
L.~R{\'e}dei, \emph{\"{U}ber das {K}reisteilungspolynom}, Acta Math. Acad. Sci.
  Hungar. \textbf{5} (1954), 27--28. \MR{0062760 (16,13h)}

\bibitem{Reinhardt22}
K.~Reinhardt, \emph{{E}xtremale {P}olygone gegebenen {D}urchmessers},
  Jahresber. Deutsch. Math.-Verein. \textbf{31}, 251--270.

\bibitem{Schoenberg64}
I.~J. Schoenberg, \emph{A note on the cyclotomic polynomial}, Mathematika
  \textbf{11}, 131--136.

\bibitem{Vincze50}
S.~Vincze, \emph{On a geometrical extremum problem}, Acta Sci. Math. Szeged
  \textbf{12} (1950), 136--142. \MR{0038087 (12,352f)}

\end{thebibliography}

\def\lfhook#1{\setbox0=\hbox{#1}{\ooalign{\hidewidth
  \lower1.5ex\hbox{'}\hidewidth\crcr\unhbox0}}}
\providecommand{\bysame}{\leavevmode\hbox to3em{\hrulefill}\thinspace}
\providecommand{\MR}{\relax\ifhmode\unskip\space\fi MR }
% \MRhref is called by the amsart/book/proc definition of \MR.
\providecommand{\MRhref}[2]{%
  \href{http://www.ams.org/mathscinet-getitem?mr=#1}{#2}
}
\providecommand{\href}[2]{#2}

\end{document}